\renewcommand{\le}{\leqslant}
\newtheorem{teo}{Theorem}[section]
\newtheorem{lem}[teo]{Lemma}
\newtheorem{prop}[teo]{Proposition}
\newtheorem{cor}[teo]{Corollary}
\theoremstyle{definition}
\newtheorem{dfn}[teo]{Definition}
\newtheorem{rk}[teo]{Remark}
\newtheorem{ex}[teo]{Example}
\def\<{\langle}
\def\>{\rangle}
\def\ss{\subset}
\def\r{\rho}
\def\t{\tau}
\def\f{{\varphi}}
\def\G{{\Gamma}}
\def\C{{\mathbb C}}
\def\Z{{\mathbb Z}}
\def\End{\mathop{\rm End}\nolimits}
\def\Ker{\mathop{\rm Ker}\nolimits}
\def\Im{\mathop{\rm Im}\nolimits}
\def\Id{\operatorname{Id}}
\def\Tr{\operatorname{Tr}}
\def\1{\mathbf 1}
\newcommand{\til}[1]{\widetilde{#1}}
\newcommand{\wh}[1]{\widehat{#1}}
\def\N{{\mathbb N}}
\begin{document}

\title[TBFT for endomorphisms]
{Twisted Burnside-Frobenius theory
for endomorphisms of polycyclic groups}

\author{Alexander Fel'shtyn}
\thanks{The work of A.F. is partially supported by the  grant  346300 for IMPAN from the Simons Foundation and the matching 2015-2019 Polish MNiSW fund.}
\address{Instytut Matematyki, Uniwersytet Szczecinski,
ul. Wielkopolska 15, 70-451 Szczecin, Poland} \email{felshtyn@mpim-bonn.mpg.de,
 fels@wmf.univ.szczecin.pl}

\author{Evgenij Troitsky}
\thanks{The work of E.T. is partially
supported by the Russian Foundation for Basic
Research under grant 16-01-00357.}
\address{Dept. of Mech. and Math., Moscow State University,
119991 GSP-1  Moscow, Russia}
\email{troitsky@mech.math.msu.su}
\urladdr{
http://mech.math.msu.su/\~{}troitsky}

\keywords{Reidemeister number, 
twisted conjugacy
class, Burnside-{Frobenius} theorem, 
unitary dual,
matrix coefficient, 
rational (finite) representation,
Gauss congruences, 
surface group}
\subjclass[2000]{20C; 
20E45; 
22D10; 
37C25; 
47H10; 
55M20
}

\begin{abstract}
Let $R(\f)$ be the number of $\f$-conjugacy (or Reidemeister)
classes of an endomorphism $\f$
of a group $G$.
We prove for several classes  of groups (including polycyclic)
that the number $R(\f)$
is equal to the number of fixed points of the induced
map of an appropriate subspace of the unitary dual space
$\wh G$,
when $R(\f)<\infty$.
Applying the result to iterations of $\f$ we obtain Gauss 
congruences for Reidemeister numbers.

In contrast with the case of automorphisms, studied previously,
we have a plenty of examples having the above finiteness condition, even among groups with
$R_\infty$ property. 
\end{abstract}

\maketitle

\section*{Introduction}
The \emph{Reidemeister number} or $\f$-\emph{conjugacy number}
of an endomorphism $\f$ of a group $G$ is the number of
its \emph{Reidemeister} or $\f$-\emph{conjugacy classes},
defined by the equivalence
$$
g\sim x g \f(x^{-1}).
$$

The interest in twisted conjugacy relations has its origins, in
particular, in the Nielsen-Reidemeister fixed point theory (see,
e.g. \cite{Jiang,Jiang84,FelshB}), in Arthur-Selberg theory (see, e.g.
\cite{Shokra,Arthur}),  Algebraic Geometry (see, e.g.
\cite{Groth}), and Galois cohomology (see, e.g.
\cite{SerreGaCohom}). In representation theory twisted conjugacy probably
occurs first in \cite{Gantmacher} (see, e.g.
\cite{Springer,Onishik-Vinberg}).

An important problem in the field is to identify the 
Reidemeister numbers with numbers 
of fixed points on an appropriate space in a way respecting
iterations. This opens possibility of obtaining congruences
for Reidemeister numbers and other important information.

For the role of the above 
``appropriate space'' typically some versions
of unitary dual can be taken.
This desired construction is called the twisted Burnside-Frobenius
theory (TBFT), because in the case of a finite group and
identity automorphism we arrive to the classical Burnside-Frobenius
theorem on enumerating of (usual) conjugacy classes. 

In the present paper we prove TBFT for endomorphisms of
any polycyclic group (Theorem \ref{teo:polyend}) and Gauss congruences for Reidemeister numbers.

In the case of automorphism this problem was solved
for polycyclic-by-finite groups in \cite{polyc,feltroKT}.
Preliminary and related results, examples and counter-examples
can be found in \cite{FelHill,FelHillForum,FelshB,FelTro,FelTroVer,FelIndTro,FelTroLuch,ncrmkwb,TroTwoEx}.

For endomorphisms of polycyclic fundamental groups
of infra-solvmanifolds of type ({R}) the Gauss congruences
were obtained in \cite{FelLee2015}.

The importance of obtaining the present results namely
for endomorphisms is justified by a plenty of examples
(in contrast with the case of automorphisms) (see,
in particular,
\ref{prop:drast}, \ref{cor:finiteimendo}, and
\ref{examp:halee} below).

A related fact is that,
in contrast with TBFT for automorphisms, TBFT for
endomorphisms is weakly connected with the theory
of $R_\infty$-groups (see e.g. Example \ref{examp:halee}).
A group is called $R_\infty$ if any its \emph{automorphism}
has infinite Reidemeister number. This was the subject
of an intensive recent research and for many groups this
property was established, see the following partial
bibliography and the literature
therein: \cite{FelPOMI,ll,f07,FelGon08Progress,%
FelLeonTro,bfg,GK,dfg,%
Nasybull2012,MubeenaSankaran2014TrGr,%
FelGon2011Q,GoWon09Crelle,%
Romankov,DekimpeGoncalves2014BLMS,FelNasy2016JGT,%
HaLee2015}.
In some situations the property $R_\infty$ has some direct topological
consequences (see e.g. \cite{GoWon09Crelle}). 

Decision problems for twisted conjugacy classes of endomorphisms of polycyclic groups
were studied in \cite{Romankov2010JGT}.

\medskip
The paper is organized in the following way.

In Section \ref{sec:prelimi} we show how drastically the theory of twisted conjugacy classes for endomorphisms differs from the theory for automorphisms.

In Section \ref{sec:dualobjend} we introduce and investigate a dual object for a pair $(G,\f)$.

In Section \ref{sec:tbftcongr}  we prove Gauss congruences for Reidemeister numbers.

In Section \ref{sec:tbftabfin}  we discuss proof of twisted Burnside--Frobenius theorem 
for endomorphisms of any finite group.

Section \ref{sec:techlem} is technical.

In Section \ref{sec:tbftpolyc} we proof the twisted Burnside--Frobenius theorem for endomorphisms  of polycyclic groups.
We finish the paper with a series of examples of $R_\infty$
groups admitting endomorphisms with finite Reidemeister numbers .

\medskip
\textsc{Acknowledgement:} 
This work is a part of our joint research project
at the Max-Planck Institute for
Mathematics (Bonn) and the most part of the results were
obtained there in Spring 2017.

The work of A.F. is partially supported by the  grant  346300 for IMPAN from the Simons Foundation and the matching 2015-2019 Polish MNiSW fund.

The work of E.T. is partially
supported by the Russian Foundation for Basic
Research under grant 16-01-00357.

\section{Preliminaries}\label{sec:prelimi}

First of all, let us make the following observation, showing
how drastically the Reidemeister numbers world for endomorphisms
differs from the Reidemeister world for automorphisms.

\begin{prop}\label{prop:drast} 
For any group $G$ there exists an endomorphism $\f:G\to G$
with $R(\f)<\infty$, namely $R(\f)=1$.
\end{prop}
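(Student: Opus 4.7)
The plan is to exhibit an explicit endomorphism witnessing $R(\varphi)=1$. The natural candidate is the trivial endomorphism $\varphi \colon G \to G$ defined by $\varphi(g) = e$ for all $g \in G$, where $e$ denotes the identity of $G$. This is indeed a homomorphism (it sends products to $e\cdot e = e$) and an endomorphism of $G$, regardless of any structural assumptions on $G$, which is why the statement holds for an arbitrary group.

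Next I would compute the twisted conjugacy relation for this $\varphi$. Since $\varphi(x^{-1}) = e$ for every $x \in G$, the defining equivalence $g \sim x g \varphi(x^{-1})$ degenerates to $g \sim x g$ for every $x \in G$. In other words, the $\varphi$-conjugacy class of $g$ is the left coset $Gg = G$ itself. Hence there is exactly one Reidemeister class, so $R(\varphi) = 1 < \infty$.

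There is no real obstacle here; the point of the proposition is precisely the contrast with the automorphism setting, where such a collapse is impossible (an automorphism must be injective, so it cannot be the constant map unless $G$ is trivial). The proposition therefore serves as motivation for the rest of Section~\ref{sec:prelimi}: restricting attention to automorphisms artificially rules out a rich class of examples, and the endomorphism theory to be developed in the subsequent sections genuinely extends the scope of the twisted Burnside--Frobenius framework.
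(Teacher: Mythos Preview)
Your proof is correct and follows exactly the same approach as the paper: take the trivial endomorphism $\varphi(g)=e$ and observe that every element becomes $\varphi$-conjugate to every other, yielding $R(\varphi)=1$. You have simply written out in detail what the paper states in a single line.
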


\begin{proof}
Take $\f$ to be the trivial map $\f(g)=e$ for any $g\in G$.
\end{proof}

This observation can be enforced.

\begin{prop}\label{prop:cosetsendo}
Suppose, $\f:G\to G$ is an endomorphism and $K:=\Ker\f$.
Then all Reidemeister classes are some unions of $K$-cosets. 
\end{prop}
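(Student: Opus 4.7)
The plan is to show directly that for every $g \in G$, the entire left coset $Kg$ lies in the $\f$-conjugacy class of $g$; since the Reidemeister classes partition $G$, this immediately forces each class to be a union of left $K$-cosets.

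The key observation is that the equivalence $g \sim x g \f(x^{-1})$ can be evaluated at any $x \in G$ we like, and the choice $x = k \in K$ is particularly favorable: because $\f(k^{-1}) = e$, one obtains
\[
x g \f(x^{-1}) = k g \f(k^{-1}) = kg.
\]
Hence $g \sim kg$ for every $k \in K$, which shows $Kg \subseteq [g]$, where $[g]$ denotes the Reidemeister class of $g$.

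To finish, I would remark that this inclusion holds uniformly: for any $h \in [g]$ one has $[h] = [g]$, so by the same argument $Kh \subseteq [h] = [g]$. Therefore every element of a Reidemeister class drags its entire left $K$-coset along with it, and the class decomposes as a disjoint union of such cosets.

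There is no real obstacle here; the content of the statement is just the one-line computation above, exploiting that $K = \Ker \f$ kills the $\f(x^{-1})$ factor in the twisted conjugation formula. The only thing worth flagging is the asymmetry: one gets unions of \emph{left} cosets $Kg$ rather than right cosets, since only the right-hand occurrence of $x$ in $x g \f(x^{-1})$ gets neutralized by $\f$.
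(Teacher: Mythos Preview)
Your proof is correct and is essentially the same one-line computation as the paper's: both choose $x = k \in K$ in the relation $g \sim x g \f(x^{-1})$ and use $\f(k^{-1}) = e$ to conclude $kg \sim g$. The paper phrases it as ``if $g_1 g_2^{-1} = k \in K$ then $g_1 = k g_2 = k g_2 \f(k^{-1})$,'' which is exactly your argument with $g_2$ in place of $g$.
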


\begin{proof}
Let $g_1$ and $g_2$ be in a $K$-coset, i.e. $g_1g_2^{-1}=k\in K$.
Then $g_1=kg_2=k g_2 \f(k^{-1})$.
\end{proof}

Using Lemma \ref{lem:epimorphmappingofclasses} we immediately obtain

\begin{cor}\label{cor:fififi}
The map $p_\f:G\to G/K$ gives a bijection of Reidemeister classes.
\end{cor}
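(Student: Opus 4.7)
The plan is to manufacture from $\f$ an endomorphism $\til\f$ of $G/K$ for which the canonical projection $p_\f$ is an intertwining epimorphism, and then feed this into Lemma \ref{lem:epimorphmappingofclasses}. Since $K=\Ker\f$ is automatically normal in $G$ and trivially $\f$-invariant (we have $\f(K)=\{e\}\ss K$), the formula $\til\f(gK):=\f(g)K$ defines an endomorphism $\til\f:G/K\to G/K$, and the identity $p_\f\circ\f=\til\f\circ p_\f$ holds by construction. Lemma \ref{lem:epimorphmappingofclasses}, applied to this intertwining epimorphism, yields a surjection from the set of Reidemeister classes of $\f$ onto the set of Reidemeister classes of $\til\f$.

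The remaining task is to verify that this induced map on classes is also injective. Suppose $g_1,g_2\in G$ have $p_\f(g_1)$ and $p_\f(g_2)$ in a common $\til\f$-class, i.e. $g_1K=x g_2\f(x^{-1})K$ for some $x\in G$. Then $g_1$ and the $\f$-conjugate $xg_2\f(x^{-1})$ of $g_2$ lie in a single $K$-coset (left and right agreeing, as $K\triangleleft G$). Proposition \ref{prop:cosetsendo} immediately places $g_1$ and $xg_2\f(x^{-1})$ in the same Reidemeister class of $(G,\f)$, and hence $g_1\sim_\f g_2$. Surjectivity and injectivity together give the bijection, in particular $R(\f)=R(\til\f)$.

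There is no real obstacle: the whole argument is a matter of assembling three already established facts -- the automatic normality and $\f$-invariance of $K$, the intertwining property of $p_\f$, and Proposition \ref{prop:cosetsendo}. The only point that warrants a moment's thought is the conversion in the injectivity step, where the $K$-coset equality provided by $\til\f$-conjugacy has to be interpreted through Proposition \ref{prop:cosetsendo} rather than by producing a conjugating element directly.
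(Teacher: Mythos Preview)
Your argument is correct and follows exactly the route the paper intends: surjectivity from Lemma~\ref{lem:epimorphmappingofclasses} (after noting $K$ is normal and $\f$-invariant), and injectivity from Proposition~\ref{prop:cosetsendo}. The paper compresses all of this into a single sentence, but you have simply written out the details it leaves implicit.
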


\begin{cor}\label{cor:finiteimendo}
Any endomorphism with finite image has a finite Reidemeister
number.
\end{cor}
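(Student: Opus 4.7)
The plan is to reduce the assertion to Corollary \ref{cor:fififi} by exhibiting $G/K$ as a finite group, where $K=\Ker\f$. First, I would invoke the hypothesis that $\Im\f$ is finite, together with the first isomorphism theorem $G/K\cong\Im\f$, to conclude that the quotient $G/K$ is finite, of order $|\Im\f|$.

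Second, I would note that $\f$ descends to a well-defined endomorphism $\bar\f:G/K\to G/K$, since $\f(K)=\{e\}\subseteq K$. Applying Corollary \ref{cor:fififi} then gives a bijection between the $\f$-conjugacy classes in $G$ and the $\bar\f$-conjugacy classes in $G/K$, hence $R(\f)=R(\bar\f)$.

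Finally, since $G/K$ is a finite group, it has only finitely many $\bar\f$-conjugacy classes (at most $|G/K|=|\Im\f|$ of them, as each class is nonempty), so $R(\f)=R(\bar\f)\le|\Im\f|<\infty$.

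There is no substantial obstacle here: the statement is essentially a formal consequence of Proposition \ref{prop:cosetsendo} together with Corollary \ref{cor:fififi}, with only the elementary identification $G/\Ker\f\cong\Im\f$ to invoke. The only minor verification is that $\f$ actually descends to the quotient, which is automatic because we quotient precisely by the kernel.
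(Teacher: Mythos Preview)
Your proof is correct and follows exactly the route the paper intends: Corollary~\ref{cor:finiteimendo} is stated without an explicit proof precisely because it is immediate from Corollary~\ref{cor:fififi} together with the isomorphism $G/\Ker\f\cong\Im\f$. Your write-up just makes this explicit, including the routine check that $\f$ descends to $G/K$.
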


\begin{dfn}
Denote by $\wh G$ the \emph{unitary dual} of $G$, by
$\wh G_f$ the part of the unitary dual formed by
irreducible finite-dimensional representations,
and by $\wh G_{ff}$ the part of $\wh G_{f}$ formed by
 \emph{finite} representations, i.e. representations
 that factorize through a finite group.
\end{dfn}

\begin{dfn}
Let us call \emph{o.t. commutant} the operator theoretical
commutant of a set $D$ of bounded operators on a Hilbert space, i.e.
the subset of
the algebra of all bounded operators on this space, 
formed by all operators
that commute with all elements of $D$. Denote it $D^\bigstar$.
\end{dfn}

\begin{lem}\label{lem:irredim}
A representation is irreducible if and only if
the o.t. commutant of the set of representing operators
is formed just by scalar operators \cite[Theorem 2, p. 114]{Kirillov}.

In particular, if $\rho$ is irreducible and $\f$ is an
epimorphism, $\rho\circ\f$ is also irreducible. 
\end{lem}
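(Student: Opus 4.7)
The first assertion of the lemma is cited directly from Kirillov's book, so no independent argument is needed beyond the reference. My plan is to reduce the second assertion to the first via an elementary set-theoretic observation about images.

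The strategy is as follows. Let $H$ denote the Hilbert space on which $\rho$ acts, and consider the two families of bounded operators
\[
\mathcal{D}_\rho := \{\rho(h) : h \in G\}, \qquad \mathcal{D}_{\rho\circ\f} := \{(\rho\circ\f)(g) : g \in G\}.
\]
I would first observe that since $\f$ is an epimorphism, the map $g \mapsto \f(g)$ is surjective onto $G$, and therefore $\mathcal{D}_{\rho\circ\f} = \mathcal{D}_\rho$ as subsets of the algebra of bounded operators on $H$. This is the one nontrivial input, and it is immediate from the definition of an epimorphism.

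Next, since the o.t. commutant depends only on the underlying set of operators, the equality $\mathcal{D}_{\rho\circ\f} = \mathcal{D}_\rho$ yields $(\mathcal{D}_{\rho\circ\f})^\bigstar = (\mathcal{D}_\rho)^\bigstar$. By the irreducibility of $\rho$ and the first part of the lemma, $(\mathcal{D}_\rho)^\bigstar$ consists only of scalar operators. Applying the first part of the lemma in the reverse direction to $\rho\circ\f$ then shows that $\rho\circ\f$ is irreducible.

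There is essentially no obstacle here: the content of the second assertion is purely the remark that composition with a surjection does not change the image of the representation, hence does not change its commutant. The only place one must be careful is not to weaken the hypothesis from epimorphism to general endomorphism, since for a non-surjective $\f$ the set $\mathcal{D}_{\rho\circ\f}$ can be strictly smaller than $\mathcal{D}_\rho$, and its commutant can then strictly contain the scalars — this is in fact precisely the phenomenon that will make the endomorphism case of TBFT genuinely richer than the automorphism case later in the paper.
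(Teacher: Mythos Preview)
Your proposal is correct and matches the paper's approach: the paper gives no explicit proof of this lemma, treating the first assertion as a citation and the second as an immediate ``in particular'' consequence, and your argument via $\mathcal{D}_{\rho\circ\f}=\mathcal{D}_\rho$ (for $\f$ surjective) is precisely the intended unpacking. Indeed, the same image-versus-commutant reasoning reappears explicitly later in the proof of Lemma~\ref{lem:keylem}, confirming that this is the mechanism the authors have in mind.
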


\begin{lem}\label{lem:sopriazhobr}
If representations $\pi$ and $\rho$ of $G$ are equivalent,
then $\pi\circ\f$ and $\rho\circ\f$ are equivalent for
any endomorphism $\f:G\to G$.
\end{lem}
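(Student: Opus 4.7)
The plan is to unwind the definition of unitary equivalence and observe that the intertwiner is insensitive to precomposition with $\f$. Concretely, if $\pi$ and $\rho$ are equivalent, there exists a unitary (or more generally invertible bounded) operator $U$ between the representation spaces such that
\begin{equation*}
U\,\pi(g)\,U^{-1} = \rho(g) \qquad \text{for every } g\in G.
\end{equation*}
The identity holds for every element of $G$, so in particular it holds when we substitute $\f(g)$ in place of $g$.

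The key step is then simply to evaluate: for any $g\in G$,
\begin{equation*}
U\,(\pi\circ\f)(g)\,U^{-1} = U\,\pi(\f(g))\,U^{-1} = \rho(\f(g)) = (\rho\circ\f)(g).
\end{equation*}
Thus the same operator $U$ intertwines $\pi\circ\f$ and $\rho\circ\f$, proving their equivalence.

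There is essentially no obstacle here; this is a one-line formal verification and is used later only to ensure that the assignment $\pi\mapsto \pi\circ\f$ descends to a well-defined map on the unitary dual $\wh G$ (or on its finite-dimensional / finite parts, by Lemma \ref{lem:irredim} for the epimorphism case). I would present it as a single short paragraph, noting afterward that the map $\wh\f:[\pi]\mapsto [\pi\circ\f]$ is therefore well defined on equivalence classes, which is the actual reason the lemma is recorded at this point in the paper.
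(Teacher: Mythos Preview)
Your proof is correct and is exactly the paper's approach: the paper's entire proof reads ``Indeed: use the same intertwining operator,'' and you have simply written out that one line in detail.
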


\begin{proof}
Indeed: use the same intertwining operator.
\end{proof}

The following statement is well known 
\begin{lem}\label{lem:epimorphmappingofclasses}
Suppose, $\f:G\to G$ is an endomorphism and
$H\ss G$ is a normal $\f$-invariant subgroup,
then $p:G\to G/H$ induces an epimorphism of Reidemeister
classes.
\end{lem}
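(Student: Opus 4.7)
The plan is straightforward: exploit $\f$-invariance of $H$ to descend $\f$ to an endomorphism of the quotient, then check that the projection descends to a well-defined surjection between Reidemeister class sets.

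First I would note that the hypothesis $\f(H)\ss H$ (together with normality of $H$) guarantees that $\f$ induces a well-defined endomorphism $\ov\f:G/H\to G/H$ by $\ov\f(gH)=\f(g)H$. This is the standard quotient construction and uses only that $H$ is normal and $\f$-invariant. With $\ov\f$ in hand, the target set of Reidemeister classes $R(\ov\f)$ makes sense, and its equivalence is $\ov g_1\sim \ov x\, \ov g_2\, \ov\f(\ov x^{-1})$.

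Next I would define the candidate map $p_*:R(\f)\to R(\ov\f)$ on representatives by $p_*([g]_\f):=[p(g)]_{\ov\f}$. The well-definedness is a one-line verification: if $g_1=x g_2\f(x^{-1})$, then applying $p$ (which is a homomorphism satisfying $p\circ\f=\ov\f\circ p$) gives $p(g_1)=p(x)\,p(g_2)\,\ov\f(p(x)^{-1})$, so the classes agree in $R(\ov\f)$. Surjectivity is then immediate: given any $\ov g\in G/H$, pick any preimage $g\in p^{-1}(\ov g)$ and observe $p_*([g]_\f)=[\ov g]_{\ov\f}$, so every Reidemeister class of $\ov\f$ is hit.

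There is no real obstacle to this argument; the only subtlety worth flagging is the bookkeeping ensuring $p\circ\f=\ov\f\circ p$, which is exactly what $\f$-invariance of $H$ buys. Note also that the argument does \emph{not} give injectivity in general (two twisted-conjugacy classes in $G$ can merge after passing to $G/H$), which is consistent with the statement giving only an epimorphism; when $H=\Ker\f$, the stronger conclusion of Corollary \ref{cor:fififi} (bijectivity) will follow from Proposition \ref{prop:cosetsendo} showing that classes are already unions of $H$-cosets.
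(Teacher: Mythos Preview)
Your argument is correct and is essentially the same as the paper's: both verify that $p$ intertwines the twisted-conjugacy actions (via $p\circ\f=\ov\f\circ p$) so that $\f$-classes map into $\ov\f$-classes, with surjectivity inherited from surjectivity of $p$. The paper's proof compresses this to a single line, but the content is identical.
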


\begin{proof}
Indeed, suppose, $p(g')=p(\til g) p(g) p(\f(\til {g}^{-1}))$. 
Then it is equal to $p(\til g g \f(\til {g}^{-1}))$.
\end{proof}

Also, we need the following
\begin{lem}\label{lem:fiinva}
Any Reidemeister class of $\f$ is $\f$-invariant.
\end{lem}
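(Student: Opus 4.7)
The statement to prove is that for any $g \in G$, the element $\f(g)$ lies in the same Reidemeister class as $g$; equivalently, that $\f$ maps each $\f$-conjugacy class into itself as a set. Since the equivalence relation defining Reidemeister classes is $g \sim x g \f(x^{-1})$ for $x \in G$, the task reduces to exhibiting a single element $x$ that transforms $g$ into $\f(g)$.

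The plan is to simply choose $x = g^{-1}$. Then
\[
x g \f(x^{-1}) = g^{-1} \cdot g \cdot \f((g^{-1})^{-1}) = \f(g),
\]
so $g \sim \f(g)$ by definition of the Reidemeister equivalence, and therefore $\f(g)$ belongs to the same class as $g$. Since this holds for every element $g$ of the class, the class is $\f$-invariant.

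There is no real obstacle here: the whole argument is a one-line verification once the correct choice of $x$ is made. The only thing worth noting is that the proof makes no use at all of injectivity or surjectivity of $\f$, so the statement is genuinely valid for arbitrary endomorphisms, which is exactly the generality required in the subsequent sections where iterates $\wh\f$ on the dual space will be analyzed.
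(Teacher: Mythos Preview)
Your proof is correct and is essentially identical to the paper's: the paper writes the single line $\f(x)=x^{-1}x\f(x)$, which is exactly your computation with $x$ in place of $g$ and the twisting element taken to be $g^{-1}$.
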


\begin{proof}
Indeed, $\f(x)=x^{-1} x \f(x)$.
\end{proof}

The following fact can be extracted from \cite[Prop. 1.6]{go:nil1}.
\begin{lem}\label{lem:fixedpointsonfaxtorandreid}
In the above situation $R(\f|_H)\le R(\f)\cdot |C(\f_{G/H})|$
where $C(\f_{G/H})$ is the fixed point subgroup for the
induced map $\f_{G/H}:G/H\to G/H$. 
\end{lem}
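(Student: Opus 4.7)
The plan is to analyze the map $\iota_*:R(\f|_H)\to R(\f)$ induced by the inclusion $H\hookrightarrow G$, which sends the $H$-Reidemeister class of $h\in H$ to its $G$-Reidemeister class (well-defined, since any conjugator $k\in H$ also lies in $G$). To prove the bound (reading $R(G)$ as $R(\f)$) it suffices to show that every nonempty fiber of $\iota_*$ has cardinality at most $|C(\f_{G/H})|$.

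Fix a $G$-class $[g]_G$ meeting $H$ and pick $h\in[g]_G\cap H$. Write $\pi:G\to G/H$ for the projection and use $\pi(h)=e$ to compute
$$
\pi(xh\f(x^{-1}))=\pi(x)\f_{G/H}(\pi(x))^{-1},
$$
so $xh\f(x^{-1})\in H$ if and only if $\pi(x)\in C(\f_{G/H})$. Consequently $[g]_G\cap H$ equals the twisted orbit $T\cdot h$ of $h$ under the subgroup $T:=\pi^{-1}(C(\f_{G/H}))\le G$, and the fiber of $\iota_*$ above $[g]_G$ is precisely the collection of $H$-orbits inside $T\cdot h$.

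Let $S:=\{x\in G:xh\f(x^{-1})=h\}$ be the twisted stabilizer of $h$. A direct check using $xh=h\f(x)$ for $x\in S$ shows $S$ is a subgroup of $G$, and the displayed identity above forces $\pi(S)\le C(\f_{G/H})$, hence $S\subseteq T$. Since $H\lhd T$, the $H$-orbits on the transitive $T$-set $T\cdot h\cong T/S$ are in bijection with the double cosets $H\backslash T/S$, which collapse to $T/HS\cong (T/H)/(HS/H)\cong C(\f_{G/H})/\pi(S)$, a set of cardinality at most $|C(\f_{G/H})|$. Summing over the image of $\iota_*$ yields $R(\f|_H)\le R(\f)\cdot|C(\f_{G/H})|$.

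The main obstacle, I expect, will be the bookkeeping around the various stabilizers and quotients: verifying that $S$ is genuinely a subgroup, that it lies in $T$ (so the orbit-stabilizer computation can be carried out inside $T$), and that the normality of $H$ is what collapses the double-coset count $H\backslash T/S$ to a quotient of $C(\f_{G/H})$ rather than of $G/H$ itself.
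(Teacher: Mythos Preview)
Your argument is correct. The paper itself does not prove this lemma; it merely remarks that the statement ``can be extracted from \cite[Prop.~1.6]{go:nil1}''. Your write-up supplies exactly such an extraction: the map $\iota_*$ from $H$-classes to $G$-classes, the identification of a fiber with the $H$-orbits on a single $T$-orbit where $T=\pi^{-1}(C(\f_{G/H}))$, and the collapse of the double-coset count $H\backslash T/S$ to a coset space of $C(\f_{G/H})$ via normality of $H$. All the verifications you flagged as potential obstacles (that the twisted stabilizer $S$ is a genuine subgroup, that $\pi(S)\le C(\f_{G/H})$ so $S\subseteq T$, and that normality of $H$ turns the double cosets into left cosets of $HS$) go through cleanly for endomorphisms, not just automorphisms. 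One cosmetic point: when you write $C(\f_{G/H})/\pi(S)$ you should read it as a coset \emph{set}, since $\pi(S)$ need not be normal in $C(\f_{G/H})$; but only the cardinality bound $[C(\f_{G/H}):\pi(S)]\le |C(\f_{G/H})|$ is used, so this does not affect the argument.
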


The following statement is well known in the field. 
\begin{lem}\label{lem:shifts}
A right shift by $g\in G$ maps bijectively Reidemeister classes of $\f$
onto Reidemeister classes of $\t_{g^{-1}}\circ \f$, where
$\t_g$ is the inner automorphism: $\t_g(x)=gxg^{-1}$.
\end{lem}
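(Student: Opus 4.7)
The plan is a direct computation showing that right multiplication by $g$ intertwines the two twisted conjugacy relations. Recall that the Reidemeister relation for $\f$ is $h_1\sim_\f h_2$ iff there exists $x\in G$ with $h_1=x h_2 \f(x^{-1})$, while for $\t_{g^{-1}}\circ\f$ it reads $h_1\sim h_2$ iff $h_1=x h_2\, g^{-1}\f(x^{-1})g$, since $(\t_{g^{-1}}\circ\f)(y)=g^{-1}\f(y)g$.

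First, I would verify that the map $R_g:G\to G$, $h\mapsto hg$, sends a $\f$-class into a single $(\t_{g^{-1}}\circ\f)$-class. If $h_1=x h_2 \f(x^{-1})$, then
$$
h_1 g \;=\; x h_2 \f(x^{-1}) g \;=\; x (h_2 g)\bigl(g^{-1}\f(x^{-1})g\bigr) \;=\; x(h_2g)\,(\t_{g^{-1}}\circ\f)(x^{-1}),
$$
so $h_1 g$ and $h_2 g$ are $(\t_{g^{-1}}\circ\f)$-equivalent. The reverse implication is the same identity read backwards: if $h_1 g = x(h_2 g)(\t_{g^{-1}}\circ\f)(x^{-1}) = x h_2 g \cdot g^{-1}\f(x^{-1}) g$, cancelling $g$ on the right gives $h_1 = x h_2 \f(x^{-1})$.

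Thus $R_g$ descends to a well-defined map on Reidemeister class sets, and because $R_g$ is a bijection of $G$ with inverse $R_{g^{-1}}$ (which by the same argument intertwines $\t_{g^{-1}}\circ\f$ with $\t_g\circ(\t_{g^{-1}}\circ\f)=\f$), the induced map on classes is a bijection. There is no genuine obstacle: the whole content is the conjugation identity $\f(x^{-1})g = g\cdot(\t_{g^{-1}}\circ\f)(x^{-1})$, which moves $g$ across $\f(x^{-1})$ at the cost of conjugating by $g^{-1}$. In particular, $R(\f)=R(\t_{g^{-1}}\circ\f)$, so the Reidemeister number depends only on the class of $\f$ in $\Out(G)$ when $\f$ is an automorphism.
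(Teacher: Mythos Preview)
Your proof is correct and follows the same approach as the paper: both rest on the single identity $xy\f(x^{-1})g = x(yg)(\t_{g^{-1}}\circ\f)(x^{-1})$. You simply spell out the reverse implication and the bijectivity more explicitly than the paper does.
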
 

\begin{proof}
This follows immediately from the equality
$$
xy\f(x^{-1})g=x (yg) g^{-1}\f(x^{-1})g=x (yg)(\t_{g^{-1}}\circ\f)(x^{-1}).
$$
\end{proof}

\section{Dual object for a pair $(G,\f)$}\label{sec:dualobjend}

For an \emph{automorphism} or at least \emph{epimorphism} $\f$
and an irreducible representation $\rho$, the representation
$\rho\circ\f$ is always irreducible. For a general endomorphism
and a finite-dimensional $\rho$ one can only decompose the
representation into irreducible components. So, one obtains a
sort of multi-valued mapping and the corresponding dynamical system
is not defined. To avoid this problem we will
act as follows. 

\begin{dfn}
We will call a representation class $[\rho]$ 
a $\wh\f$-\textbf{f}-point,
if $\rho$ is equivalent to $\rho\circ\f$ (we avoid to say
that it is a fixed point, because we can not define the
corresponding dynamical system).
\end{dfn}

\begin{dfn}
An element $[\rho]\in \wh{G}$ (respectively, in $\wh{G}_f$
or $\wh{G}_{ff}$)
is called $\f$-\emph{irreducible} if $\r\circ \f^n$ is
irreducible for any $n=0,1,2,\dots$.

Denote the corresponding subspaces of $\wh{G}$ (resp., $\wh{G}_f$
or $\wh{G}_{ff}$)
by $\wh{G}^\f$ (resp., $\wh{G}^\f_f$ or $\wh{G}^\f_{ff}$). 
\end{dfn}

In some important cases these subspaces coincide with
the entire spaces:

\begin{prop}\label{prop:goodex}
\begin{enumerate}
\item If $G$ is abelian, $\wh{G}=\wh{G}_f=\wh{G}^\f=\wh{G}^\f_f$.
\item If $\f$ is an epimorphism, $\wh{G}=\wh{G}^\f$ and $\wh{G}_f=\wh{G}^\f_f$
\end{enumerate}
\end{prop}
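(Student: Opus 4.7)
The plan is to derive both parts directly from Lemma \ref{lem:irredim} together with a couple of elementary observations. In each case the inclusions $\wh G^\f \subseteq \wh G$ and $\wh G^\f_f \subseteq \wh G_f$ are immediate from the definition (taking $n = 0$), so only the reverse inclusions require proof.

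For part (1), I would first invoke the standard consequence of Lemma \ref{lem:irredim} that for an abelian $G$ every $\rho \in \wh G$ is one-dimensional: since $G$ is abelian, each $\rho(h)$ commutes with every $\rho(g)$, so $\rho(h)$ lies in the o.t.\ commutant of the representing operators and is therefore a scalar, which forces $\dim \rho = 1$. In particular $\wh G = \wh G_f$ and every element of $\wh G$ is a character $\chi : G \to \mathbb{T}$. For any endomorphism $\f$ and any $n \ge 0$, the composite $\chi \circ \f^n$ is again a homomorphism into $\mathbb{T}$, hence one-dimensional and therefore irreducible. Thus every $[\chi] \in \wh G$ lies in $\wh G^\f_f$, which closes the chain of equalities.

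For part (2), I would observe that a composition of epimorphisms is an epimorphism, so $\f^n$ is an epimorphism for every $n \ge 0$. The ``in particular'' clause of Lemma \ref{lem:irredim} then yields by induction on $n$ that $\rho \circ \f^n$ is irreducible whenever $\rho$ is, so every $[\rho] \in \wh G$ lies in $\wh G^\f$. For the finite-dimensional variant, $\rho \circ \f^n$ acts on the same Hilbert space as $\rho$ and hence has the same dimension; thus $[\rho] \in \wh G_f$ forces $[\rho \circ \f^n] \in \wh G_f$ for every $n$, giving $\wh G_f \subseteq \wh G^\f_f$.

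There is essentially no genuine obstacle here: the proposition is a routine unwinding of the definition of $\wh G^\f$ once the ``in particular'' clause of Lemma \ref{lem:irredim} and the one-dimensionality of irreducibles of abelian groups are in hand.
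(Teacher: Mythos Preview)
Your proposal is correct and follows essentially the same approach as the paper: part (1) via the one-dimensionality of irreducible representations of abelian groups, and part (2) via Lemma \ref{lem:irredim} together with the observation that powers of an epimorphism are epimorphisms. The paper's proof is simply a terser version of what you wrote.
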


\begin{proof}
The first statement immediately follows from the fact that a
representation of an abelian group is irreducible if and only
if it is 1-dimensional.

The second one follows from Lemma \ref{lem:irredim} keeping in
mind that $\f^n$ is an epimorphism if and only if $\f$ is. 
\end{proof}

Evidently continuous (w.r.t. the topology of weak containment)
maps $\wh{\f^n}:[\rho]\mapsto[\rho\circ\f^n]$ are defined for these subspaces $\wh{G}^\f$, $\wh{G}_f^\f$, and $\wh{G}_{ff}^\f$
(generally not a homeomorphism!) and $\wh{\f^n}=(\wh{\f})^n$.
Thus we obtain a dynamical system as the corresponding 
action of the semigroup $\mathbb{N}_0=\{0,1,2,\dots\}$
(we will reserve $\mathbb{N}$ for $\{1,2,\dots\}$). 

The key observation is the following one:
\begin{lem}\label{lem:keylem}
Let $[\rho]\in \wh G$ be an $\wh\f^n$-{\rm\textbf{f}}-point for some $n\geq 1$,
i.e. the representations $\rho$ and $\rho\circ\f^n$ are
equivalent. Then $[\rho]\in \wh G^\f$.
\end{lem}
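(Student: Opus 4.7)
The plan is to show directly that $\rho\circ\f^k$ is irreducible for every $k\in\N_0$, which is exactly the defining condition for $[\rho]\in\wh G^\f$. I split into two cases depending on whether $k\le n$ or $k>n$.

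For $0\le k\le n$ the argument rests on the monotonicity of images under iteration: $\f^n(G)=\f^k(\f^{n-k}(G))\ss\f^k(G)$, and hence $\{\rho(\f^n(g)):g\in G\}\ss\{\rho(\f^k(g)):g\in G\}$. A larger set of operators has a smaller o.t.\ commutant, so the o.t.\ commutant of $\rho\circ\f^k$ is contained in the o.t.\ commutant of $\rho\circ\f^n$. The hypothesis says $\rho\circ\f^n$ is equivalent to the irreducible $\rho$, hence is itself irreducible, so by Lemma \ref{lem:irredim} its o.t.\ commutant consists solely of scalars. Since scalars always lie in any o.t.\ commutant, the o.t.\ commutant of $\rho\circ\f^k$ must also equal the scalars, and a second application of Lemma \ref{lem:irredim} yields irreducibility of $\rho\circ\f^k$.

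For $k>n$ I reduce to the previous case. Applying Lemma \ref{lem:sopriazhobr} to the equivalence $\rho\sim\rho\circ\f^n$ with the endomorphism $\f^{k-n}:G\to G$ gives $\rho\circ\f^{k-n}\sim\rho\circ\f^k$. Writing $k=qn+r$ with $0\le r<n$ and iterating this step $q$ times produces the chain $\rho\circ\f^k\sim\rho\circ\f^{k-n}\sim\cdots\sim\rho\circ\f^r$. The first case guarantees that $\rho\circ\f^r$ is irreducible, and irreducibility is preserved under equivalence of representations, so $\rho\circ\f^k$ is irreducible as well.

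I do not anticipate a serious obstacle. The only genuine ingredient is the containment $\f^n(G)\ss\f^k(G)$ for $k\le n$, which converts the single hypothesis $[\rho\circ\f^n]=[\rho]$ into irreducibility of every intermediate iterate $\rho\circ\f^k$, $0\le k\le n$; the extension to $k>n$ is purely bookkeeping via Lemma \ref{lem:sopriazhobr}.
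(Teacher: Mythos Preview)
Your proof is correct and uses the same two ingredients as the paper: the image containment $\Im(\rho\circ\f^{b})\supseteq\Im(\rho\circ\f^{a})$ for $b\le a$ together with Lemma~\ref{lem:irredim}, and the propagation of equivalence via Lemma~\ref{lem:sopriazhobr}. The only difference is organizational: the paper first shows $\rho\circ\f^{kn}\sim\rho$ for all $k$ and then, for an arbitrary $m$, chooses $kn\ge m$ and applies the commutant inclusion, whereas you first handle $0\le k\le n$ by comparing with $n$ and then reduce $k>n$ modulo $n$ via equivalences; the content is the same.
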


\begin{proof}
By Lemma \ref{lem:sopriazhobr} we obtain
$$
\rho\sim \rho\circ\f^n \sim \dots \sim \rho\circ\f^{kn} \sim \dots 
$$
In particular, all they are irreducible. Now consider an arbitrary
$m$ and choose $k$ such that $m \leq kn$.
Then $\Im \rho\circ\f^{m}\supseteq \Im \rho\circ\f^{kn}$
and $(\Im \rho\circ\f^{m})^\bigstar\subseteq (\Im \rho\circ\f^{kn})^\bigstar$.
It remains to apply Lemma \ref{lem:irredim} and conclude that
$\rho\circ\f^{m}$ is irreducible.
\end{proof}

\begin{cor}\label{cor:periodicanddyn}
So, we have no dynamical system generated by $\f$
on $\wh G$ (resp, $\wh G_f$, or $\wh{G}_{ff}$)
generally, but
we have the well-defined notion of a
$\wh\f^n$-\textbf{f}-point.

The corresponding well-defined dynamical system exists on 
$\wh G^\f$ (resp, $\wh G^\f_f$, or $\wh{G}^\f_{ff}$) and its $n$-periodic
points are exactly $\wh\f^n$-\textbf{f}-points.
\end{cor}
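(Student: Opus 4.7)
The plan is to dispatch the first claim as essentially a reformulation of what has already been observed, and to reduce the second claim to a direct application of Lemma \ref{lem:keylem} together with the definition of $\wh G^\f$.

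For the first claim, I would note that by Lemma \ref{lem:sopriazhobr} the assignment $[\rho]\mapsto[\rho\circ\f]$ is well defined on equivalence classes, but a priori it lands only in the set of all equivalence classes of representations: if $\f$ is not an epimorphism, $\rho\circ\f$ need not be irreducible (cf.\ Lemma \ref{lem:irredim}), so $\wh\f$ fails to be a self-map of $\wh G$ and there is no honest dynamical system. On the other hand, the notion of a $\wh\f^n$-\textbf{f}-point only requires the relation $\rho\sim\rho\circ\f^n$, which is meaningful for any representation, so this notion is well-defined on all of $\wh G$ (respectively $\wh G_f$, $\wh G_{ff}$).

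For the second claim, the key point I would establish first is that $\wh\f$ restricts to a genuine self-map of $\wh G^\f$. If $[\rho]\in\wh G^\f$ then $\rho\circ\f^n$ is irreducible for every $n\ge 0$; replacing $\rho$ by $\rho\circ\f$, this says that $(\rho\circ\f)\circ\f^n=\rho\circ\f^{n+1}$ is irreducible for every $n\ge 0$, hence $[\rho\circ\f]\in\wh G^\f$. The analogous verification handles $\wh{G}^\f_f$ (the dimension of the representation space of $\rho\circ\f$ equals that of $\rho$) and $\wh{G}^\f_{ff}$ (if $\rho$ factors through a finite quotient $G/N$, then $\rho\circ\f$ factors through $G/\f^{-1}(N)$, which is finite because it embeds into $G/N$). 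Well-definedness on equivalence classes comes once again from Lemma \ref{lem:sopriazhobr}.

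Once $\wh\f$ is a self-map of $\wh G^\f$, the iterates $\wh{\f^n}=(\wh\f)^n$ act on $\wh G^\f$, and an $n$-periodic point is by definition an element $[\rho]\in\wh G^\f$ with $[\rho\circ\f^n]=[\rho]$, i.e.\ a $\wh\f^n$-\textbf{f}-point lying in $\wh G^\f$. The last step is to invoke Lemma \ref{lem:keylem}, which guarantees that every $\wh\f^n$-\textbf{f}-point of the ambient space $\wh G$ automatically belongs to $\wh G^\f$; hence the two sets coincide, and the same holds verbatim after intersection with $\wh G_f$ or $\wh G_{ff}$. I do not anticipate any serious obstacle, as the corollary is essentially a structural repackaging of Lemma \ref{lem:keylem}; the only mildly delicate point is the stability check for $\wh{G}^\f_{ff}$, which is nevertheless settled by the one-line kernel argument above.
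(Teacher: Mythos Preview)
Your proposal is correct and follows the approach implicit in the paper, which states the corollary without proof as an immediate consequence of Lemma~\ref{lem:keylem}. You have simply filled in the routine verifications (well-definedness, stability of $\wh G^\f$, $\wh G^\f_f$, $\wh G^\f_{ff}$ under $\wh\f$) that the paper leaves to the reader.
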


\begin{dfn}
Denote the number of $\wh\f^n$-\textbf{f}-points by $\mathbf{F}(\wh\f^n)$.
\end{dfn}

\section{TBFT implies congruences}\label{sec:tbftcongr}
\begin{dfn}\label{dfn:tbft}
We say that TBFT (resp., TBF$T_f$, TBF$T_{ff}$) takes place for
an endomorphism $\f:G\to G$ and its iterations, if
$R(\f^n)<\infty$ and $R(\f^n)$ coincides with the
number of $\wh\f^n$-\textbf{f}-points in $\wh G$
(resp., in $\wh G_f$, $\wh G_{ff}$)
for all $n\in \mathbb{N}$.

Similarly, one can give a definition for a single
endomorphism (without iterations).
\end{dfn}

\begin{dfn}
Denote by $\mu(d)$, $d\in\N$, be the {\em M\"obius function},
i.e.
$$
\mu(d) =
\left\{
\begin{array}{ll}
1 & {\rm if}\ d=1,  \\
(-1)^k & {\rm if}\ d\ {\rm is\ a\ product\ of}\ k\ {\rm distinct\ primes,}\\
0 & {\rm if}\ d\ {\rm is\ not\ square-free.}
\end{array}
\right.
$$
\end{dfn}

\begin{teo}
Suppose,  TBFT (resp., TBFT$_f$ or TBFT$_{ff}$) takes place for
an endomorphism $\f:G\to G$ and its iterations.
In particular, $R(\f^n)<\infty$ for any $n$.
Then one has the following Gauss congruences
for Reidemeister numbers:
 $$
 \sum_{d\mid n} \mu(d)\cdot R(\phi^{n/d}) \equiv 0 \mod n
 $$
for any $n$.
\end{teo}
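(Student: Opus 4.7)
The approach is the standard orbit-counting / Möbius inversion argument, applied to the genuine dynamical system $\wh\f$ living on $\wh G^\f$ (resp.\ $\wh G^\f_f$, $\wh G^\f_{ff}$). Recall from Corollary \ref{cor:periodicanddyn} that, although $\wh\f$ need not preserve $\wh G$, it does restrict to a well-defined self-map of $\wh G^\f$, and the $n$-periodic points of this self-map are exactly the $\wh\f^n$-\textbf{f}-points. The crucial input is Lemma \ref{lem:keylem}: every $\wh\f^n$-\textbf{f}-point in $\wh G$ automatically lies in $\wh G^\f$. Hence counting $\wh\f^n$-\textbf{f}-points in $\wh G$ is the same as counting $n$-periodic points of the restricted dynamical system on $\wh G^\f$. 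The same remark works verbatim for the subspaces $\wh G^\f_f$ and $\wh G^\f_{ff}$, since the properties of being finite-dimensional and of factoring through a finite group are preserved under precomposition with $\f$.

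The plan is then as follows. First, under the TBFT hypothesis, $R(\f^n) = \mathbf{F}(\wh\f^n) < \infty$ for every $n$, so the set of $n$-periodic points is finite and decomposes into finitely many periodic orbits. Let $P_d$ be the number of orbits of the dynamical system $\wh\f$ whose exact period is $d$. Counting periodic points by orbits gives
\begin{equation*}
\mathbf{F}(\wh\f^n) \;=\; \sum_{d\mid n} d\, P_d,
\end{equation*}
since an orbit of exact period $d$ contributes $d$ fixed points of $\wh\f^n$ precisely when $d\mid n$, and none otherwise. Second, Möbius inversion on the divisor lattice yields
\begin{equation*}
n\, P_n \;=\; \sum_{d\mid n} \mu(d)\, \mathbf{F}(\wh\f^{n/d}).
\end{equation*}
Third, invoking TBFT once more to rewrite each $\mathbf{F}(\wh\f^{n/d})$ as $R(\f^{n/d})$ and noting that $P_n$ is a nonnegative integer, we obtain
\begin{equation*}
\sum_{d\mid n} \mu(d)\, R(\f^{n/d}) \;=\; n\, P_n \;\equiv\; 0 \pmod n,
\end{equation*}
as required.

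There is essentially no obstacle beyond bookkeeping: the TBFT hypothesis directly identifies Reidemeister numbers with fixed-point counts of iterates, and Lemma \ref{lem:keylem} together with Corollary \ref{cor:periodicanddyn} guarantees that these fixed-point counts really come from a single dynamical system, so that the elementary orbit/Möbius computation applies. The one place to be careful is to perform the orbit decomposition on the appropriate invariant subspace ($\wh G^\f$, $\wh G^\f_f$, or $\wh G^\f_{ff}$) rather than on the ambient dual, so that ``orbit'' is well-defined; Lemma \ref{lem:keylem} ensures no $\wh\f^n$-\textbf{f}-points are lost in this restriction, so the counts agree with those appearing in Definition \ref{dfn:tbft}.
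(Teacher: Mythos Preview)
Your proof is correct and follows essentially the same approach as the paper: both invoke Corollary~\ref{cor:periodicanddyn} (backed by Lemma~\ref{lem:keylem}) to obtain a genuine dynamical system on $\wh G^\f$ (resp.\ $\wh G^\f_f$, $\wh G^\f_{ff}$), then apply the standard orbit decomposition and M\"obius inversion. The only cosmetic difference is that the paper lets $P_n$ denote the number of \emph{points} of least period $n$ (so that $\mathbf{F}(\wh\f^n)=\sum_{d\mid n} P_d$ and $P_n\equiv 0\bmod n$ because each orbit contributes $n$ points), whereas you let $P_n$ denote the number of \emph{orbits} of exact period $n$; the two formulations are equivalent.
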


\begin{proof}
This follows from \ref{cor:periodicanddyn} and the
general theory of congruences for periodic points
(cf. \cite{Smale1967BAMS,Zarelua2008Steklo}).

More precisely, let $P_n$ be the number of 
periodic points of least period $n$
of the dynamical system of \ref{cor:periodicanddyn}.
Then $R(\f^n)=\mathbf{F}(\wh\f^n)=\sum\limits_{d|n} P_d$. By the 
M\"obius inversion formula,
$$
\sum\limits_{d|n} \mu(d) R(\f^{n/d})=P_n \equiv 0
\mod n,
$$
since each orbit brings to $P_n$ just $n$ points. 
\end{proof}

\begin{rk}
In \cite{FelTroJGT} it is shown that TBFT$_f$ and
TBFT$_{ff}$ are equivalent for finitely generated groups
(see also \cite{TroTwoEx}).
\end{rk}

\section{TBFT for endomorphisms of abelian and finite groups}
\label{sec:tbftabfin}

For these classes all irreducible representations
are finite-dimensional. That is why TBFT is the same as
TBFT$_f$. In \cite{FelHill} using Pontryagin duality the following
statement was proved.

\begin{prop}\label{prop:abelcase}
TBFT$_f$ holds for any endomorphism of an abelian group.
\end{prop}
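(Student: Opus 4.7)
\medskip

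\noindent\textbf{Proof sketch (proposal).} The plan is a direct reduction to Pontryagin duality, exploiting two simplifications that are special to the abelian setting: irreducible representations are one-dimensional, and the set of twisted commutators forms a subgroup.

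First I would rewrite both sides in terms of a single subgroup. Since $G$ is abelian, for any $g\in G$ one has $xg\f(x^{-1})=g\cdot x\f(x^{-1})$, so the twisted conjugacy relation is nothing but translation by elements of
$$
H\;:=\;\{x\f(x^{-1})\colon x\in G\}.
$$
A short computation using commutativity shows that $H$ is in fact a subgroup of $G$ (it is the image of the homomorphism $x\mapsto x\f(x^{-1})$). Hence the Reidemeister classes are precisely the cosets of $H$, and $R(\f)=[G:H]$.

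Next I would describe the $\wh\f$-\textbf{f}-points. By Proposition \ref{prop:goodex}(1), $\wh G=\wh G_f$ consists of characters; equivalence of characters coincides with equality, so $[\rho]$ is a $\wh\f$-\textbf{f}-point iff $\rho\circ\f=\rho$, i.e.\ $\rho(x\f(x^{-1}))=1$ for all $x\in G$, i.e.\ $\rho$ is trivial on $H$. Therefore the set of $\wh\f$-\textbf{f}-points is canonically identified with the Pontryagin dual $\wh{G/H}$, and $\mathbf F(\wh\f)=|\wh{G/H}|$.

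Finally I would invoke Pontryagin duality for discrete abelian groups: for a finite abelian group $A$ one has $|\wh A|=|A|$, while if $A$ is infinite, $\wh A$ is an infinite compact group with infinitely many continuous characters of $G$ contributing to the count. Applied to $A=G/H$ this gives $|\wh{G/H}|=[G:H]=R(\f)$ in the finite case, proving TBFT$_f$; and in the infinite case both $R(\f)$ and $\mathbf F(\wh\f)$ are infinite. Exactly the same argument with $\f^n$ in place of $\f$, together with the identity $\wh{\f^n}=(\wh\f)^n$, handles iterations. The only mildly delicate point is checking that $H$ is a subgroup (immediate from abelianness) and that the correspondence $\rho\leftrightarrow \rho|_{G/H}$ is a bijection between $\wh\f$-\textbf{f}-points and $\wh{G/H}$; apart from these verifications the proof is essentially Pontryagin duality and nothing more.
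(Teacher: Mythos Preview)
Your argument is correct and is precisely the Pontryagin-duality proof the paper invokes by citing \cite{FelHill}; indeed your outline (the Reidemeister class of $e$ is a subgroup $H$, the other classes are its cosets, and the $\wh\f$-\textbf{f}-points are exactly the characters of $G/H$) matches almost verbatim the proof the paper gives for the adjacent Proposition~\ref{prop:abelcasefin}. No gap and no genuinely different approach here.
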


For polycyclic groups below we need also

\begin{prop}\label{prop:abelcasefin}
TBFT$_{ff}$ holds for any endomorphism of an abelian group.
\end{prop}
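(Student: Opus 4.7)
The plan is to piggyback on Proposition~\ref{prop:abelcase}. Since $G$ is abelian, every irreducible unitary representation is one-dimensional (by Lemma~\ref{lem:irredim} the commutant must consist of scalars, and on an abelian group this forces dimension one), so $\wh G=\wh G_f$ and equivalence of irreducible representations is just equality of characters. Consequently, a character $\chi\in\wh G$ is a $\wh\f^n$-\textbf{f}-point exactly when $\chi=\chi\circ\f^n$, that is, precisely when $\chi$ is trivial on the subgroup $H_n$ generated by $\{g\,\f^n(g^{-1}):g\in G\}$ — in additive notation the image of $\Id-\f^n$.

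Next, I would note that the Reidemeister classes of $\f^n$ are exactly the cosets of $H_n$, so that $R(\f^n)=[G:H_n]$. Under the hypothesis $R(\f^n)<\infty$ built into Definition~\ref{dfn:tbft}, the quotient $G/H_n$ is therefore a finite abelian group. Every $\wh\f^n$-\textbf{f}-point, being a character of $G$ that kills $H_n$, factors through this finite quotient, and so by definition belongs to $\wh G_{ff}$. This shows that the set of $\wh\f^n$-\textbf{f}-points in $\wh G_{ff}$ coincides with the set of $\wh\f^n$-\textbf{f}-points in $\wh G_f=\wh G$, for every $n\in\mathbb{N}$.

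Applying Proposition~\ref{prop:abelcase} (TBFT$_f$ for the abelian group $G$ together with all iterates of $\f$), this common cardinality equals $R(\f^n)$ for every $n$, which is exactly TBFT$_{ff}$.

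There is essentially no genuine obstacle here: the only conceptual point is that finiteness of $R(\f^n)$ collapses, on the subset of $\wh\f^n$-fixed characters, the distinction between finite-dimensional and strictly finite representations, since a $\wh\f^n$-fixed character is automatically pulled back from the finite quotient $G/H_n$. Care will only be needed to make explicit the additive description of $H_n$ and the invocation of Pontryagin duality already used in the proof of Proposition~\ref{prop:abelcase}, both of which are standard.
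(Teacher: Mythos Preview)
Your proof is correct and follows essentially the same route as the paper: both arguments identify the Reidemeister class of $e$ as the subgroup $H=\Im(\Id-\f)$, observe that Reidemeister classes are $H$-cosets so $R(\f)=|G/H|$, and conclude that the $\wh\f$-\textbf{f}-points are exactly the characters factoring through the finite quotient $G/H$, hence lie in $\wh G_{ff}$. The only cosmetic difference is that you invoke Proposition~\ref{prop:abelcase} as a black box to count, while the paper's proof is self-contained (the induced map on $G/H$ and its dual is trivial, giving exactly $|G/H|=R(\f)$ fixed finite characters directly).
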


\begin{proof}
Indeed, in an abelian group
$$
x\f(x^{-1})y\f(y^{-1})=(xy)\f((xy)^{-1}), \qquad
(x\f(x^{-1}))^{-1}=x^{-1}\f(x),
$$
$$
xg\f(x^{-1})(yg\f(y^{-1}))^{-1}=(xy^{-1})\f((x^{-1}y)).
$$
This shows that the Reidemeister class of $e$ is a subgroup $H$,
and the other classes are $H$-cosets. Being a Reidemeister class,
$H$ is $\f$-invariant (see Lemma \ref{lem:fiinva} above) and the factorization
$p:G\to G/H$ gives a bijection of Reidemeister classes. The induced
action on $G/H$ is trivial, as well as on $\wh{G/H}$.
The fixed representations $\rho\circ p$, where $\rho$ runs
over $\wh{G/H}$, are desired finite representations.
\end{proof}

\begin{teo}[cf. \cite{FelHill}]\label{teo:peterweylforendo}
Let $\f:G\to G$ be an endomorphism of a finite group $G$.
Then the Reidemeister number $R(\f)$ coincides with
the number of $\wh\f$-{\rm\textbf{f}}-points on $\wh G$, i.e. TBFT
is true in this situation. 
\end{teo}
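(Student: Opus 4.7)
The plan is to reduce the theorem to the case of an automorphism of a finite group, where TBFT is part of the classical twisted Burnside--Frobenius theory (cf.\ \cite{FelHill}). Since $G$ is finite, the ascending chain of kernels $K_n := \Ker \f^n$ stabilizes; fix $N$ with $K := K_N = K_{N+1} = \cdots$. Then $K$ is a normal $\f$-invariant subgroup, and the endomorphism $\bar\f$ induced on $\bar G := G/K$ is injective (because $\bar\f([g]) = [e]$ forces $\f(g)\in K_N$, hence $g \in K_{N+1} = K$), so $\bar\f$ is an automorphism of the finite group $\bar G$.

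The second step is to show $R(\f) = R(\bar\f)$. Proposition \ref{prop:cosetsendo} and Corollary \ref{cor:fififi} give $R(\f) = R(\psi_1)$ for the endomorphism $\psi_1$ induced on $G/K_1$. Applying the same pair to $\psi_1$ (note $\Ker\psi_1 = K_2/K_1$) yields $R(\psi_1) = R(\psi_2)$ on $G/K_2$, and after $N$ iterations one lands on $R(\f) = R(\bar\f)$. Invoking the automorphism version of TBFT on $\bar G$ then gives $R(\bar\f) = \mathbf{F}(\wh{\bar\f})$, the number of $\wh{\bar\f}$-fixed points on $\wh{\bar G}$.

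It remains to identify $\mathbf{F}(\wh{\bar\f})$ with the number of $\wh\f$-\textbf{f}-points on $\wh G$. Pulling back along $p: G \to \bar G$ sends $[\sigma]\in \wh{\bar G}$ to $[\sigma\circ p]\in \wh G$; irreducibility is preserved by Lemma \ref{lem:irredim}, and the relation $p\circ\f = \bar\f\circ p$ together with Lemma \ref{lem:sopriazhobr} shows that a $\wh{\bar\f}$-fixed $\sigma$ produces a $\wh\f$-\textbf{f}-point. Conversely, if $\rho \sim \rho\circ\f$, iterating Lemma \ref{lem:sopriazhobr} gives $\rho \sim \rho\circ\f^N$, so for each $k\in K$ we obtain $\tr\rho(k) = \tr\rho(\f^N(k)) = \tr\rho(e) = \dim\rho$. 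Since $\rho(k)$ is unitary, its eigenvalues lie on the unit circle, and the triangle inequality forces them all to equal $1$, so $\rho(k) = I$. Hence $\rho$ factors through $\bar G$, and the inflation $\wh{\bar G}\hookrightarrow \wh G$ becomes a bijection onto the set of $\wh\f$-\textbf{f}-points.

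The main obstacle is the final descent: showing every $\wh\f$-\textbf{f}-point must be trivial on the stable kernel $K$. This is precisely where unitarity of irreducible representations is essential, combined with the stabilization $\f^N|_K \equiv e$, so that the character equality $\tr\rho(k) = \dim\rho$ upgrades to the pointwise equality $\rho(k) = I$. The rest is bookkeeping: iterated use of Corollary \ref{cor:fififi} and an appeal to the well-known automorphism case of TBFT for finite groups.
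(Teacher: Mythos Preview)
Your proof is correct but follows a genuinely different route from the paper. The paper argues directly via the Peter--Weyl decomposition of $\C[G]$: the Reidemeister number equals the dimension of the space of $\f$-class functions, which splits as $\sum_{[\rho]}\dim T_\rho$ where $T_\rho=\{a\in\End V_\rho : a=\rho(g)\,a\,\rho(\f(g^{-1}))\text{ for all }g\}$ is the space of intertwiners $\rho\circ\f\to\rho$. The key observation is that a nonzero $a\in T_\rho$ forces $\rho\circ\f$ to contain a copy of $\rho$, hence (by equality of dimensions) $\rho\circ\f$ is itself irreducible and equivalent to $\rho$; Schur then gives $\dim T_\rho=1$. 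Thus $R(\f)$ counts exactly the $\wh\f$-\textbf{f}-points, without ever passing to a quotient.

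Your argument instead reduces to the automorphism case: stabilize the kernel chain to get $K=K_N$, show $\bar\f$ is an automorphism of $\bar G=G/K$, iterate Corollary~\ref{cor:fififi} to identify $R(\f)=R(\bar\f)$, invoke the classical automorphism TBFT on $\bar G$, and then match up \textbf{f}-points by the character-plus-unitarity trick ($\tr\rho(k)=\dim\rho$ forces $\rho(k)=I$). This is a clean and legitimate reduction, and it has the virtue of explaining structurally \emph{why} the endomorphism case is no harder than the automorphism case: every $\wh\f$-\textbf{f}-point automatically lives on the quotient where $\f$ becomes bijective. The paper's direct approach, by contrast, is self-contained (it does not assume the automorphism case) and isolates the single new phenomenon for endomorphisms, namely that irreducibility of $\rho\circ\f$ is not automatic but is forced by the existence of a nonzero intertwiner together with $\dim\rho=\dim\rho\circ\f$.
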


\begin{proof}
Let us note that $R(\f)$ is equal to the dimension of
the space of $\f$-class functions (i.e. those functions
that are constant on Reidemeister classes). They
can be also described as fixed elements of the action
$a\mapsto g a \f(g^{-1})$ on the group algebra $\C[G]$.
For the latter algebra we have the Peter-Weyl decomposition
$$
\C[G]\cong \bigoplus_{[\rho]\in \wh G} \End V_\rho,\qquad
\rho:G\to U(V_\rho),
$$
which respects the left and right $G$-actions. Hence,
$$
R(\f)=\sum\limits_{[\rho]\in \wh G} \dim T_\rho,
\qquad T_\rho:=\{a\in \End V_\rho\: |\: a=\rho(g) a \rho(\f(g^{-1})
\mbox{ for all } g\in G\}.
$$
Thus, if $0\ne a \in T_\rho$,
$a$ is an intertwining operator between the irreducible
representation $\rho$ and some representation $\rho\circ\f$.
This implies that $\rho$ is equivalent to some 
(irreducible) subrepresentation $\pi$ of $\rho\circ\f$
(cf. \cite[VI, p.57]{NaimarkStern}). Hence, 
$\dim \rho=\dim \pi$, while $\dim \rho = \dim \rho\circ\f$.
Thus, $\pi=\rho\circ\f$, and is irreducible. 
In this situation $\dim T_\rho=1$ by the Schur lemma.
Evidently, vice versa, if $\rho\sim \rho\circ \f$ then 
$\dim T_\rho =1$.
Hence,
$$
R(\f)=\sum\limits_{[\rho]\in \wh G} \left\{
\begin{array}{l}
1,\mbox{ if }\rho\sim \rho\circ \f \\
0,\mbox{ if }\rho\not\sim \rho\circ \f
\end{array}
\right.
= \mbox{ number of }\wh\f\mbox{-\textbf{f}-points}.
$$
\end{proof}

\section{Technical lemmas}\label{sec:techlem}
We will need the following

\begin{lem}\label{lem:funczional}
Let $\rho$ be a finite representation. 
It is a $\wh\f$-\textbf{f}-point if and only if
there exists a non-zero $\f$ class function
being a matrix coefficient of $\rho$.

In this situation this function is unique
up to scaling and is defined by the formula
\begin{equation}\label{eq:formulaclassfun}
T_{S,\rho}:g\mapsto \Tr (S\circ \rho(g)),
\end{equation}
where $S$ is an intertwining operator between
$\rho$ and $\rho\circ \f$:
$$
\rho(\f(x)) S= S \rho(x)\quad\mbox{ for any }x\in G.
$$

In particular, 
TBFT$_{ff}$ is true for $\f$ if and only 
if the above matrix coefficients form a base of
the space of $\f$-class functions.
\end{lem}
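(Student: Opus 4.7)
The plan is to translate the condition ``$T_A\colon g\mapsto \Tr(A\rho(g))$ is a $\f$-class function'' into an algebraic condition on the operator $A\in\End V_\rho$. Spelling out $T_A(yg\f(y^{-1}))=T_A(g)$ and invoking the cyclic property of the trace rewrites this as
$$
\Tr\bigl((\rho(\f(y))^{-1}A\rho(y)-A)\rho(g)\bigr)=0
\qquad \text{for all } y,g\in G.
$$
Since $\rho$ is irreducible and finite-dimensional, Burnside's density theorem guarantees that $\{\rho(g):g\in G\}$ spans $\End V_\rho$; together with the non-degeneracy of the trace pairing on $\End V_\rho$, this forces $\rho(\f(y))^{-1}A\rho(y)=A$, i.e.\ $A\rho(y)=\rho(\f(y))\,A$ for every $y\in G$. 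In other words, $T_A$ is a $\f$-class function if and only if $A$ is an intertwining operator from $\rho$ to $\rho\circ\f$; and by the same Burnside-plus-trace argument, $T_A\equiv 0$ forces $A=0$, so the map $A\mapsto T_A$ is injective.

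For $(\Leftarrow)$ of the main equivalence, a non-zero intertwiner $A$ from the irreducible $\rho$ to $\rho\circ\f$ has trivial kernel; finite-dimensionality then makes $A$ invertible, so $\rho\sim\rho\circ\f$ and $\rho$ is a $\f$-\textbf{f}-point. For $(\Rightarrow)$, an intertwining isomorphism $S$ yields $T_S\neq 0$ by the injectivity above, and a direct computation using $S\rho(y)=\rho(\f(y))S$ together with the cyclic trace confirms that $T_S$ is a $\f$-class function. Uniqueness up to scaling then follows: by Lemma~\ref{lem:keylem} both $\rho$ and $\rho\circ\f$ are irreducible, so Schur's lemma gives a one-dimensional space of intertwiners, and the injective map $A\mapsto T_A$ carries this to the matrix-coefficient side.

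For the ``in particular'' statement, when $R(\f)<\infty$ the space $\mathcal{F}_\f$ of $\f$-class functions $G\to\C$ has dimension exactly $R(\f)$. By the main equivalence, each $\wh\f$-\textbf{f}-point in $\wh G_{ff}$ contributes the unique-up-to-scaling non-zero element $T_{S,\rho}\in\mathcal{F}_\f$; and matrix coefficients of pairwise inequivalent irreducible finite representations are linearly independent, since any finite collection of such representations factors through a common finite quotient (the intersection of their kernels), where the classical Schur orthogonality relations apply. Hence the number of $\wh\f$-\textbf{f}-points in $\wh G_{ff}$ equals the dimension of the span of the $T_{S,\rho}$ inside $\mathcal{F}_\f$, and this span fills all of $\mathcal{F}_\f$ precisely when that number equals $R(\f)$---which is exactly the assertion of TBFT$_{ff}$.

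The crux of the argument---and the step I would handle most carefully---is the dual use of Burnside's density theorem: it is what bridges the pointwise condition (``$T_A$ is constant on Reidemeister classes'') and the operator-level condition (``$A$ intertwines $\rho$ with $\rho\circ\f$''), and it simultaneously yields injectivity of $A\mapsto T_A$. Finite-dimensionality of $\rho$ is essential at this point; once the intertwiner interpretation is in place, the remaining pieces (Schur's lemma, invertibility in finite dimensions, linear independence via Schur orthogonality on a common finite quotient) are routine.
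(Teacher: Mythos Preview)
Your proof is correct and follows essentially the same line as the paper's: both convert the $\f$-class-function condition on $g\mapsto\Tr(D\rho(g))$ into the intertwining condition $\rho(\f(x))D=D\rho(x)$ by using that $\rho(G)$ (equivalently $\rho(\ell^1(G))$) spans $\End V_\rho$, then invoke Schur for uniqueness and linear independence of matrix coefficients for the final clause. Your version is a bit more explicit---you name Burnside's density theorem, spell out injectivity of $A\mapsto T_A$, and justify irreducibility of $\rho\circ\f$ (where the paper simply writes ``Schur lemma'')---but the architecture is the same.
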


\begin{proof}
First, let us note that (\ref{eq:formulaclassfun})
defines a class function:
$$
T_{S,\rho}(x g \f(x^{-1}))= 
\Tr (S \rho(x g \f(x^{-1})))=
\Tr (\rho(\f(x)) S \rho(g) \rho(\f(x^{-1}))=\Tr (S\rho(g)).
$$
If $S\ne 0$, then $\rho(a)=S^*$ for some $a\in \ell^1(G)$,
and $\Tr(SS^*) \neq 0$. Thus, the $\f$-class function is non-zero.
On the other hand, any matrix coefficient of $\rho$, i.e.
a functional $T:\End(V_\rho)\to\C$ has the form
$g\mapsto \Tr(D\rho(g))$ for some fixed matrix $D\ne 0$.
If it is a $\f$-class function, then for any $g\in G$,
or similarly, $a\in \ell^1(G)$,
$$
\Tr(D\rho(a))=\Tr(D \rho(x a \f(x^{-1})))=
\Tr(\rho(\f(x^{-1})) D \rho(x) \rho(a) ).
$$
Since $\rho(a)$ runs over all the matrix algebra,
this implies $D=\rho(\f(x^{-1})) D \rho(x)$, or
$\rho(\f(x)) D= D \rho(x)$, i.e. $D$ is the desired
non-zero intertwining operator.

The uniqueness up to scaling follows now from 
the explicit formula and the Shur lemma.

The last statement follows from linear
independence of matrix coefficients of
non-equivalent representations.
\end{proof}

\begin{lem}\label{lem:factoriz}
A group $G$ satisfies TBFT$_{ff}$ for an
endomorphism $\f:G\to G$ if and only if
there exists a $\f$-equivariant factorization
of $G$ onto a finite group $F$, such that
Reidemeister classes on $G$ map onto distinct
classes on $F$.
\end{lem}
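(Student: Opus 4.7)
The plan is to combine Lemma \ref{lem:funczional}, which reformulates TBFT$_{ff}$ as the condition that the matrix-coefficient $\f$-class functions of the finite $\wh\f$-\textbf{f}-points form a basis of the space of $\f$-class functions on $G$, with Theorem \ref{teo:peterweylforendo}, which asserts TBFT for any finite group. The key auxiliary fact is that if $[\rho]\in \wh G_{ff}$ is a $\wh\f$-\textbf{f}-point, then $\ker\rho$ is automatically a finite-index, $\f$-invariant normal subgroup of $G$: equivalent representations have the same kernel, and $\ker(\rho\circ\f)=\f^{-1}(\ker\rho)$, so $\rho\sim\rho\circ\f$ forces $\f^{-1}(\ker\rho)=\ker\rho$, whence $\f(\ker\rho)\ss\ker\rho$.

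For $(\Leftarrow)$, let $p\colon G\to F$ be a $\f$-equivariant epimorphism onto a finite group inducing a bijection of Reidemeister classes, and let $\bar\f\colon F\to F$ be the induced endomorphism. The bijection, together with Lemma \ref{lem:epimorphmappingofclasses}, makes pullback along $p$ a linear isomorphism from the space of $\bar\f$-class functions on $F$ onto the space of $\f$-class functions on $G$. Applying Theorem \ref{teo:peterweylforendo} and Lemma \ref{lem:funczional} to $F$, the matrix-coefficient class functions of the $\wh{\bar\f}$-\textbf{f}-points $[\bar\rho]\in \wh F$ form a basis of the former space. Pulling back, each such function is the class-function matrix coefficient of the finite irreducible representation $\rho:=\bar\rho\circ p\in \wh G_{ff}$ (the same intertwiner $S$ works, by $\f$-equivariance of $p$), and $\rho$ is itself a $\wh\f$-\textbf{f}-point since $\rho\circ\f=\bar\rho\circ\bar\f\circ p\sim\bar\rho\circ p=\rho$. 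Hence the pullbacks form a basis of the space of $\f$-class functions on $G$, and Lemma \ref{lem:funczional} yields TBFT$_{ff}$ for $\f$.

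For $(\Rightarrow)$, assume TBFT$_{ff}$, so $R(\f)<\infty$ and there are only finitely many $\wh\f$-\textbf{f}-points $\rho_1,\dots,\rho_k$ in $\wh G_{ff}$, whose class-function matrix coefficients $T_{S_i,\rho_i}$ form a basis of the space of $\f$-class functions on $G$. By the auxiliary fact, each $N_i:=\ker\rho_i$ is $\f$-invariant and of finite index, hence so is $N:=\bigcap_i N_i$. Set $F:=G/N$ with quotient $p\colon G\to F$; then $\f$ descends to $\bar\f\colon F\to F$ and $p$ is $\f$-equivariant. Each $\rho_i$ factors through $p$, so each $T_{S_i,\rho_i}$, and therefore every $\f$-class function on $G$, is a pullback of some function on $F$. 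A short check using surjectivity and $\f$-equivariance of $p$ shows this pulled-back function is automatically a $\bar\f$-class function; thus pullback from $F$ to $G$ is a surjection between the corresponding class-function spaces, and injectivity (from surjectivity of $p$) makes it an isomorphism. This gives $R(\f)=R(\bar\f)$, and combined with surjectivity of the induced map of Reidemeister classes (Lemma \ref{lem:epimorphmappingofclasses}) forces that map to be a bijection.

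The main obstacle is the $(\Rightarrow)$ direction, where one must manufacture a finite $\f$-equivariant quotient with the precise separation property. The decisive input is the automatic $\f$-invariance of $\ker\rho$ for every $\wh\f$-\textbf{f}-point $[\rho]\in \wh G_{ff}$, which makes the finite intersection $\bigcap_i N_i$ a common $\f$-invariant finite-index normal subgroup. A minor subtlety is the passage from ``every $\f$-class function on $G$ descends to some function on $F$'' to ``it descends to a $\bar\f$-class function'', which reduces to surjectivity and $\f$-equivariance of $p$.
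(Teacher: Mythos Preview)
Your proof is correct and follows essentially the same route as the paper's: in both directions the key inputs are Lemma~\ref{lem:funczional} and Theorem~\ref{teo:peterweylforendo}, and in the $(\Rightarrow)$ direction both arguments hinge on the observation that $\ker\rho$ is $\f$-invariant for each $\wh\f$-\textbf{f}-point and then take $N=\bigcap_i\ker\rho_i$. The only cosmetic difference is that for $(\Rightarrow)$ the paper concludes $R(\f_F)=R(\f)$ via the two-sided estimate $R(\f_F)\le R(\f)$ (from Lemma~\ref{lem:epimorphmappingofclasses}) and $R(\f_F)\ge R(\f)$ (since the $\rho_i$ descend to distinct $\f_F$-\textbf{f}-points), whereas you obtain the equality by exhibiting an explicit isomorphism of the two spaces of twisted class functions; these are equivalent bookkeeping choices.
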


\begin{proof}
Let $\r_1$,\dots $\r_k$ be all finite
$\wh\f$-\textbf{f}-representations, $F_1$, \dots $F_k$,
$F_i=\r_i(G)$ the corresponding finite groups.
Suppose, $g\in \Ker \r_i=:K_i$. Then $\r(\f(g))=
S\r(g) S^{-1}=S S^{-1}=e$. Hence, $K_i$ is a normal
$\f$-invariant subgroup. Define $K:=\cap K_i$. It is still
a normal
$\f$-invariant subgroup of finite index.
Let $F:=G/K$. By Lemma \ref{lem:epimorphmappingofclasses}
the Reidemeister number of the induced map $\f_F$ 
satisfies $R(\f_F)\leq R(\f)$.
On the other hand, $\r_i$ define disjoint irreducible representations of $F$ corresponding to  $\wh\f_F$-{\rm\textbf{f}}-points on $\wh F$. By Lemma \ref{lem:funczional}
this implies $R(\f_F)\geq R(\f)$. Thus, $G\to F$ gives a bijection
of Reidemeister classes.

The opposite statement follows from Lemma \ref{lem:funczional}
and Theorem \ref{teo:peterweylforendo}. Indeed, if $p:G\to F$
is the mentioned epimorphism with $r=R(\f)=R(\f_F)$, then by  
Theorem \ref{teo:peterweylforendo} we have exactly 
$r$ pairwise non-equivalent irreducible unitary 
$\wh{\f_F}$-\textbf{f}-representations
$\r_1,\dots,\r_r$ of $F$. Then $\r_i\circ p$ play the same role
for $G$. Finally $G$ can not have ``additional'' $\wh\f$-\textbf{f}-representations by linear independence in Lemma 
\ref{lem:funczional}.
\end{proof}
We can enforce this statement.

\begin{lem}\label{lem:nonequivfactoriz}
Let $\f:G\to G$ be an endomorphism with $R(\f)<\infty$.
Suppose there exists a (not necessarily equivariant) factorization
$p$
of $G$ onto a finite group $F$, such that
Reidemeister classes on $G$ map bijectively onto some disjoint
subsets (not necessarily classes) on $F$.
Then $G$ has TBFT$_{ff}$ for $\f$.
\end{lem}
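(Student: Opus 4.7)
The plan is to exploit the fact that although $p:G\to F$ is not $\f$-equivariant, it still transports Reidemeister equivalence to a natural action of $G$ on the finite set $F$, and then to count orbits of that action in two ways: geometrically (yielding $R(\f)$) and representation-theoretically (yielding the number of finite $\wh\f$-\textbf{f}-representations of $G$ that factor through $p$).

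I would first introduce the auxiliary homomorphism $q:=p\circ\f:G\to F$ and note the identity
$$
p(x g\f(x^{-1}))=p(x)\,p(g)\,q(x)^{-1},
$$
which shows that $x\cdot f:=p(x)\,f\,q(x)^{-1}$ defines a left action of $G$ on $F$ and that $A_i:=p(C_i)$ is precisely the $G$-orbit through $p(g_i)$. Since orbits are either equal or disjoint, the hypothesis that the $A_i$ be pairwise \emph{distinct} upgrades automatically to pairwise \emph{disjoint}; surjectivity of $p$ moreover makes the $A_i$ exhaust all orbits. Consequently $C_i=p^{-1}(A_i)$, every Reidemeister class is a union of $(\Ker p)$-cosets, and every $\f$-class function on $G$ has the form $\bar\varphi\circ p$ with $\bar\varphi\in\C^F$ constant on every $A_i$. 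Pull-back by $p$ therefore identifies the space $V$ of $\f$-class functions with $(\C^F)^G$ and gives $\dim V=R(\f)$.

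Next I would decompose $\C^F$ by Peter--Weyl on the finite group $F$,
$$
\C^F\;\cong\;\bigoplus_{\sigma\in\wh F}\End(V_\sigma),
$$
with $F\times F$ acting on each summand by $(a,b)\cdot T=\sigma(a)\,T\,\sigma(b)^{-1}$. Pulled back to $G$ through $(p,q):G\to F\times F$, the $G$-invariants in $\End(V_\sigma)$ are exactly the operators $T:V_\sigma\to V_\sigma$ satisfying $\sigma(p(x))T=T\sigma(q(x))$ for all $x$, i.e.\ the elements of $\Hom_G(\sigma\circ q,\,\sigma\circ p)$. Since $\sigma\circ p$ is irreducible (Lemma~\ref{lem:irredim}) and has the same dimension as $\sigma\circ q$, Schur's lemma forces this dimension to be $0$ or $1$, with value $1$ precisely when $\sigma\circ p\sim\sigma\circ q$---which is exactly the condition for $\rho_\sigma:=\sigma\circ p$ to be a $\wh\f$-\textbf{f}-point in $\wh G_{ff}$. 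Summing over $\wh F$ yields
$$
R(\f)=\dim V=\dim(\C^F)^G=\#\{\sigma\in\wh F:\rho_\sigma\text{ is a }\wh\f\text{-\textbf{f}-point}\}.
$$

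To conclude I would invoke Lemma~\ref{lem:funczional}: each of these $R(\f)$ representations $\rho_\sigma$ contributes a one-dimensional space of $\f$-class matrix coefficients, and matrix coefficients coming from pairwise inequivalent irreducible finite-dimensional representations of $G$ are linearly independent. Thus the corresponding $R(\f)$ matrix coefficients of finite $\wh\f$-\textbf{f}-representations form a basis of $V$, and by the last statement of Lemma~\ref{lem:funczional} this is TBFT$_{ff}$ for $\f$. The one delicate point is the very first step: the bare hypothesis ``distinct subsets'' means only ``pairwise unequal,'' and the key observation is that because the $A_i$ are $G$-orbits, inequality automatically upgrades to disjointness. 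Once this is in place, the rest is standard representation theory on the finite group $F$.
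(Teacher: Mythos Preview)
Your proof is correct, and in fact more careful than the paper's on one point. The paper's argument is extremely terse: it simply observes that on a finite group $F$ the characteristic function of \emph{any} subset is a matrix coefficient of some finite representation (namely the regular one), so the pullbacks $\chi_{A_i}\circ p$ give the characteristic functions $\chi_{C_i}$ as matrix coefficients of finite representations of $G$; then Lemma~\ref{lem:funczional} is invoked to conclude. Implicit in that last step is the observation that the twisted-conjugation action on functions preserves each block $M_\rho$ of matrix coefficients, so the $\f$-class functions among finite matrix coefficients decompose blockwise and one can count.

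Your route is different in execution: you set up the $G$-action $x\cdot f=p(x)\,f\,q(x)^{-1}$ on $F$, identify $p(C_i)$ as $G$-orbits, decompose $\C^F$ via Peter--Weyl, and compute the invariants summand by summand as intertwiner spaces $\Hom_G(\sigma\circ q,\sigma\circ p)$, finishing with Schur. This is more explicit and has the virtue of making one point transparent that the paper glosses over: the stated hypothesis is only that the images $p(C_i)$ be pairwise \emph{distinct}, whereas the pullback step in the paper's proof needs them pairwise \emph{disjoint} (so that $C_i=p^{-1}(A_i)$). Your orbit observation is exactly what upgrades ``distinct'' to ``disjoint'' for free. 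In the paper's only application (Lemma~\ref{lem:shiftsfinite}) disjointness is verified directly, so nothing is actually wrong there, but your argument handles the lemma as stated.
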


\begin{proof}
For $F$, characteristic functions of \emph{any} sets are
matrix coefficients of some (finite) representations.
Thus, characteristic functions of Reidemeister classes on $G$
are matrix coefficients of some finite representations
(coming from $F$). Lemma \ref{lem:funczional} completes
the proof.
\end{proof}

\begin{dfn}
We will say under the supposition of Lemma \ref{lem:nonequivfactoriz}
that $p$
\emph{separates} Reidemeister classes. Similarly, we will say
that $p$ separates some classes if it maps them to disjoint
subsets.
\end{dfn}

\begin{lem}\label{lem:shiftsfinite}
Let $\f$ be an endomorphism of a 
group $G$ with $R(\f)<\infty$. Then $G$ has 
TBFT$_{ff}$ for $\f$
if and only if the right shifts
of all Reidemeister classes form a finite number of subsets
of $G$.
\end{lem}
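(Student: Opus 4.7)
The plan is to turn the finiteness of right shifts into a normal subgroup of finite index that can be fed into Lemma \ref{lem:nonequivfactoriz}, and conversely to read off such finiteness from the equivariant factorization supplied by Lemma \ref{lem:factoriz}.

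For the forward implication, I assume TBFT$_{ff}$ holds for $\f$. Lemma \ref{lem:factoriz} then produces a $\f$-equivariant epimorphism $p:G\to F$ with $F$ finite, under which the $r=R(\f)$ Reidemeister classes $H_1,\dots,H_r$ of $G$ map onto \emph{pairwise distinct} Reidemeister classes $\ov H_i:=p(H_i)$ of $F$. Because $\f$-equivariance sends each $H_j$ into some $\ov H_i$ and distinct $\ov H_i$ are disjoint, one obtains $H_i=p^{-1}(\ov H_i)$; in particular each $H_i$ is a union of cosets of $N:=\Ker p$. Normality of $N$ then gives $H_i g=H_iNg=H_igN$, so $H_ig$ is again a union of $N$-cosets, and is determined by the pair $(\ov H_i,p(g))$. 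Since there are at most $r\cdot|F|$ such pairs, only finitely many distinct right shifts can occur.

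For the converse, let $\cS:=\{H_ig:1\le i\le r,\ g\in G\}$, finite by hypothesis. Right multiplication by $G$ permutes $\cS$, giving a homomorphism $G\to\operatorname{Sym}(\cS)$ whose kernel $N$ is a normal subgroup of finite index in $G$. Taking the trial element $g'=e$ in the defining condition $H_ig'g=H_ig'$ shows that every $g\in N$ stabilises each $H_i$, so each Reidemeister class of $\f$ is a union of $N$-cosets. Consequently, distinct Reidemeister classes of $G$ have disjoint, hence distinct, images in the finite quotient $F:=G/N$, and Lemma \ref{lem:nonequivfactoriz} delivers TBFT$_{ff}$ for $\f$.

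The main technical point I expect to worry about is the identification $H_i=p^{-1}(\ov H_i)$ in the forward direction: it requires combining $\f$-equivariance of $p$ (so that Reidemeister classes in $G$ land inside classes in $F$) with the sharper distinctness output of Lemma \ref{lem:factoriz}. The backward direction is essentially a group-action bookkeeping, and the only thing to get right is that the action of $N$ on the full orbit $\cS$ (not just on the individual classes $H_i$) forces the factorization to separate Reidemeister classes.
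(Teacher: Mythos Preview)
Your proof is correct and follows the same overall strategy as the paper: in both directions one exhibits a finite-index normal subgroup whose cosets refine the Reidemeister classes, and then invokes Lemma~\ref{lem:factoriz} (forward) or Lemma~\ref{lem:nonequivfactoriz} (backward). The one noteworthy difference is in how normality is obtained in the converse. The paper intersects the right-shift stabilizers of the individual classes $H_1,\dots,H_r$ to get a finite-index subgroup $H$, and then performs an explicit computation (via Lemma~\ref{lem:shifts} and the twisted endomorphisms $\tau_{z^{-1}}\circ\f$) to verify that $H$ is normal. You instead let $G$ act by right multiplication on the \emph{entire} finite orbit $\cS=\{H_ig\}$ and take the kernel of the resulting homomorphism $G\to\operatorname{Sym}(\cS)$; normality is then automatic, and the stabilization of each $H_i$ follows by specializing $g'=e$. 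This is a cleaner packaging of the same idea and avoids the paper's calculation, at the cost of possibly producing a smaller subgroup (which is harmless here).
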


\begin{proof}
First, let us note, that if we have a bijection for  Reidemeister
classes of $\f$, then we have a bijection for Reidemeister
classes of any $\t_g\circ \f$ by Lemma \ref{lem:shifts}.

Now the ``only if'' direction is evident, because these sets are
pre-images of some sets in finite $F$.

Suppose, there are finitely many shifts. This means, that the
stabilizers under right shifts of any Reidemeister class of $\f$
have finite index. Since $R(\f)<\infty$, their intersection
is a subgroup $H$ in $G$ of finite index. 
By Lemma \ref{lem:shifts}, its elements stabilize Reidemeister
classes of any $\t_g\circ\f$.

Suppose, $h\in H$, then
$$
yg\f(y^{-1})z h z^{-1}=y(gz) z^{-1}\f(y^{-1})z h z^{-1}=
y(gz)(\t_{z^{-1}}\circ \f)(y^{-1}) h z^{-1}
$$
$$
=
x(gz)(\t_{z^{-1}}\circ \f)(x^{-1}) z^{-1}=
xg\f(x^{-1}) 
$$
for some $x$.
Thus, $H$ is normal (and $\f$-invariant, if $\f$ is an automorphism).
The projection $G\to G/H=:F$ maps Reidemeister classes of $\f$
bijectively onto disjoint sets in $F$. Indeed, if $h\in H$, then for any
$g$,
$$
e\cdot h= x e (\t_g\circ \f)(x^{-1}).
$$
This means, that $H$ entirely is in the Reidemeister class
of $e$ of any $\t_g\circ\f$. By Lemma \ref{lem:shifts}
this means that all Reidemeister classes of $\phi$ are formed
by $H$-cosets and we are done.
 
It remains to apply Lemma \ref{lem:nonequivfactoriz}.
\end{proof}

\begin{lem}\label{lem:numberofinner}
Suppose, $G$ has only finitely many inner automorphisms
and an endomorphism $\f:G\to G$ with $R(\f)<\infty$.
Then $G$ has TBFT$_{ff}$ for $\f$.
\end{lem}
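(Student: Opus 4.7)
The plan is to reduce the statement to Lemma \ref{lem:shiftsfinite}, which characterizes TBFT$_{ff}$ (under the assumption $R(\f)<\infty$) by finiteness of the family of right shifts of Reidemeister classes. So the task becomes showing that only finitely many subsets of $G$ arise as right translates of Reidemeister classes of $\f$.

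First I would invoke Lemma \ref{lem:shifts}: for each $g\in G$, right translation by $g$ sends the Reidemeister classes of $\f$ bijectively onto the Reidemeister classes of $\t_{g^{-1}}\circ\f$. The hypothesis that $\Inn(G)$ is finite means that as $g$ ranges over $G$, the inner automorphism $\t_{g^{-1}}$ takes only finitely many values, and hence the endomorphisms $\t_{g^{-1}}\circ\f$ form a finite set $\f_1,\dots,\f_m$. By the bijection given by right translation, $R(\f_i)=R(\f)<\infty$ for each $i$. Therefore the total collection of right translates of Reidemeister classes of $\f$ is contained in the union $\bigcup_{i=1}^m\{\text{Reidemeister classes of }\f_i\}$, a finite set of subsets of $G$.

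Applying Lemma \ref{lem:shiftsfinite} then yields TBFT$_{ff}$ for $\f$.

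There is essentially no obstacle here beyond bookkeeping: the whole point is that finiteness of $\Inn(G)$ trivializes the orbit of $\f$ under the ``twisting'' $\f\mapsto \t_{g^{-1}}\circ\f$ appearing in Lemma \ref{lem:shifts}, which is precisely the source of new shifted classes. The only thing worth double-checking is that each $\f_i$ is an honest endomorphism (it is, being a composition of an inner automorphism with $\f$) so that the previous lemmas apply, and that the finiteness of $R(\f_i)$ follows from Lemma \ref{lem:shifts} rather than requiring a separate argument.
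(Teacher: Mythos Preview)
Your proof is correct and follows exactly the paper's approach: use Lemma~\ref{lem:shifts} to see that the right shifts of Reidemeister classes of $\f$ are among the Reidemeister classes of the finitely many endomorphisms $\t_{g^{-1}}\circ\f$ (at most $|\Inn(G)|$ of them, each contributing $R(\f)$ classes), and then invoke Lemma~\ref{lem:shiftsfinite}. The paper states this in a single sentence bounding the number of shifts by $R(\f)\cdot|\Inn(G)|$, but the content is identical.
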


\begin{proof}
By Lemma \ref{lem:shifts}, the number of right shifts of Reidemeister classes
is not more than $R(\phi)\cdot |Inn(G)|$. It remains to
apply Lemma \ref{lem:shiftsfinite}.
\end{proof}

\begin{lem}\label{lem:fixedpointsforabelian}
Let $\f:A\to A$ be an endomorphism of a finitely
generated abelian group $A$ with $R(\f)<\infty$.
Then the number of fixed elements $C(\f)$ on $A$
is finite.
\end{lem}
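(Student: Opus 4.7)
The plan is to switch to additive notation and reduce the statement to a single fact about the endomorphism $\psi := \Id-\f$ of the finitely generated abelian group $A$.

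First I would observe that, writing $A$ additively, the twisted conjugacy relation $g\sim xg\f(x^{-1})$ becomes $g\sim g+(x-\f(x))$. Hence the Reidemeister classes are exactly the cosets of the image $\psi(A) = (\Id-\f)(A)$, so that
\[
R(\f) \;=\; [A:(\Id-\f)(A)].
\]
On the other hand, the set of fixed elements satisfies
\[
C(\f) \;=\; \Fix(\f) \;=\; \Ker(\Id-\f) \;=\; \Ker\psi .
\]
Thus the problem reduces to the following: \emph{if the image $\psi(A)$ has finite index in $A$, then $\Ker\psi$ is finite.}

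Next I would invoke the structure of finitely generated abelian groups. Let $n$ be the $\Z$-rank of $A$. The hypothesis $R(\f)<\infty$ forces $[A:\psi(A)]<\infty$, which means $\psi(A)$ has rank $n$ (a subgroup of smaller rank cannot have finite index). Tensoring the short exact sequence $0\to\Ker\psi\to A\to\psi(A)\to 0$ with the flat $\Z$-module $\Q$ gives the rank identity
\[
\mathrm{rank}(\Ker\psi) \;=\; \mathrm{rank}(A)-\mathrm{rank}(\psi(A)) \;=\; n-n \;=\; 0.
\]
A subgroup of a finitely generated abelian group of rank zero is contained in the (finite) torsion subgroup, so $C(\f)=\Ker\psi$ is finite.

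I do not anticipate any serious obstacle: the only substantive step is the passage from the multiplicative twisted conjugacy relation to cosets of $\psi(A)$, after which the statement is immediate from the elementary rank formula for homomorphisms of finitely generated abelian groups. None of the preceding material on the unitary dual or Peter--Weyl is needed.
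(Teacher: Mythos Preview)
Your argument is correct. The paper takes a slightly different route: it first quotients by the torsion subgroup to reduce to $A=\Z^n$, then invokes the fact (cited from \cite{FelHill}) that $R(\f)<\infty$ forces $\det(\Id-\f)\ne 0$, whence $\Id-\f$ is injective on $\Z^n$. Your approach bypasses both the reduction and the external citation by appealing directly to the rank formula for $\psi=\Id-\f$ on the whole of $A$; this is more self-contained and arguably cleaner, since the determinant condition in the paper is just another way of saying that $\psi\otimes\Q$ is an isomorphism. Conversely, the paper's reduction makes explicit that on the free part there are no nontrivial fixed points at all, which is a marginally sharper intermediate statement but not needed for the lemma as stated.
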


\begin{proof}
The torsion subgroup is finite and totally invariant.
Factorizing we reduce the problem to the case of $\Z^n$
by Lemma \ref{lem:epimorphmappingofclasses}.
In this case we will show that $\f$ has only the trivial
fixed element $0$. By \cite{FelHill} in this case
$\det(\Id-\f)\ne 0$, considered as $n\times n$ integer matrix.
Diagonalising this matrix by left and right multiplication
by unimodular matrix (as it was done e.g. in \cite{BBPT})
we see that it can not have (non-zero) eigenvector with eigenvalue 
zero, i.e. there is no non-trivial $\f$-fixed point. 
\end{proof}

\section{TBFT$_{ff}$ for endomorphisms of polyciclic groups}
\label{sec:tbftpolyc}

Consider a polycyclic group $G$. Its (finite) derived series is
formed by fully invariant subgroups $G_i$ with abelian
quotients $A_i=G_i/G_{i+1}$. The key difference from a
general finitely generated solvable group is the following:
all $G_i$ and $A_i$ are finitely generated (see \cite{Robinson} for
details). Let $G_n\neq \{e\}$, $G_{n+1}=\{e\}$.

We will argue by induction. For the basis of this
induction let us observe that for the abelian group $G_n$
and any its endomorphism with finite Reidemeister number
we have TBFT$_{ff}$ by Prop. \ref{prop:abelcase}.
Now suppose by induction that same is true for $G_{i+1}$
and prove it for $G_i$.
Denote $G_{i+1}=:H$, $G_i=:\G$, $A_i=:A$.

Consider an endomorphism $\f:\G\to\G$ with $R(\f)<\infty$
and induced endomorphisms $\f_H:H\to H$ and $\f_A:A\to A$.
Then by Lemma \ref{lem:epimorphmappingofclasses}
$R(\f_A)<\infty$ and by Lemma \ref{lem:fixedpointsforabelian}
the number of fixed elements of $\f_A$ on $A$ is finite.
Thus, by Lemma \ref{lem:fixedpointsonfaxtorandreid}
$R(\f_H)<\infty$. Let $H_0\ss H$ be a normal $\f$-invariant
subgroup of finite index such that $H\to H/H_0$ gives a 
bijection on Reidemeister classes (see Lemma \ref{lem:factoriz}).
This means that classes of $\f_H$ are some unions of
$H_0$ cosets. Consider automorphisms $\t_g: H\to H$,
$\t_g(h)=ghg^{-1}$,
$g\in \G$, and define $H_1:=\cap_{g\in \G} \t_g(H_0)$.
All subgroups in the intersection have the same finite index
in $H$. Since $H$ is finitely generated, $H_1$ also has
a finite index. By construction, $H_1$ is normal in $\G$.
Also, 
$$
\f(\t_g(h_0))=\f(g) \f(h_0) (\f(g))^{-1}=
\t_{\f(g)}(\f(h_0)),\qquad h_0,\: \f(h_0) \in H_0,
$$
i.e. $\f(\t_g(H_0))\ss \t_{\f(g)}(H_0)$. Hence, $H_1$
is $\f$-invariant and $H_1\ss H_0$, thus
$H_1$ can play the same role as $H_0$
with an additional property of being normal in $\G$.
In particular,  classes of $\f_H$ are some unions of
$H_1$ cosets. The same is true for intersections
of Reidemeister classes of $\f$ with $H$ (because
they are unions of some classes of $\f_H$). This means
that $\G\to \G/H_1$ separates these classes, i.e. the classes
which map on the Reidemeister class of $e\in A$ under
$\G\to A$. 
Similarly we can find subgroups $H_2,\dots H_{R(\f_A)}$
which separate classes over other classes (i.e. which are mapped onto other classes)
of $A$.
For this purpose, suppose $g\in \G$ is over some other
class of $A$. Then the classes of $\t_g\circ \f$ 
(with the same finite Reidemeister number) that intersect
with $H$ are obtained from the classes under consideration
by a shift by $g$ (see Lemma \ref{lem:shifts}). Then a group $H_2=H^g_1$ constructed
for $\t_g\circ \f|_H$ in the same way as $H_1$ for $\f_H$,
will separate these classes. 
Moreover, it will be $\t_g \circ \f$-invariant for this
specific $g$ and normal in $\G$, i.e. $\t_{g'}$-invariant
for any $g'\in \G$. Taking $g'=g^{-1}$ we see that
$H^g_1$ is $\f$-invariant. 
Similarly we define the other $H_i$
choosing $g$ over other Reidemeister classes of $\f_A$.
Taking the (finite !) intersection
of $H_1$, $H_2$,... $H_{R(\f_A)}$ we obtain a $\f$-invariant 
subgroup of finite index
$H'\ss H$, which is normal in $\G$, and
$p: \G\to \G/H'=\G'$ gives a bijection on Reidemeister
classes; $F:=p(H)=H/H' \ss \G'$ is a finite normal subgroup
and $\G'/F\cong A$:
$$
\xymatrix{
H\ar@{^{(}->}[r]\ar[d]^p&  \G\ar[d]^{p} \ar[rd] &\\
H/H'\ar@{^{(}->}[r]\ar@{=}[d]&  \G/H'\ar@{=}[d] \ar[r] & \G/H\ar@{=}[d]\\
F\ar@{^{(}->}[r]& \G' \ar[r]&A.
}
$$
Thus, $\G'$ is finitely generated finite-by-abelian group, 
and it has finitely many inner automorphisms. Lemma \ref{lem:numberofinner} completes the proof of the
following statement.

\begin{teo}\label{teo:polyend}
Let $\f:G\to G$ be an endomorphism of a polycyclic
group with $R(\f)<\infty$. Then TBFT$_{ff}$ is true
for $\f$.
\end{teo}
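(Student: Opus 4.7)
The plan is to argue by induction on the length of the derived series of the polycyclic group $G$, using that every polycyclic group has a finite derived series with finitely generated abelian quotients, and reducing the final step to Lemma \ref{lem:numberofinner} via Lemma \ref{lem:factoriz}.

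First, I would set up the induction. The base case is when the group is abelian, where TBFT$_{ff}$ holds by Proposition \ref{prop:abelcasefin}. For the inductive step, write $\Gamma = G_i$, $H = G_{i+1}$, $A = \Gamma/H$ abelian and finitely generated, and assume TBFT$_{ff}$ holds for every endomorphism of $H$ with finite Reidemeister number. Given $\f:\Gamma\to\Gamma$ with $R(\f)<\infty$, Lemma \ref{lem:epimorphmappingofclasses} yields $R(\f_A)<\infty$, Lemma \ref{lem:fixedpointsforabelian} gives $|C(\f_A)|<\infty$, and Lemma \ref{lem:fixedpointsonfaxtorandreid} then gives $R(\f_H)<\infty$. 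By induction and Lemma \ref{lem:factoriz}, there is a normal $\f_H$-invariant subgroup $H_0\subseteq H$ of finite index such that $H\to H/H_0$ is a bijection on Reidemeister classes of $\f_H$.

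The first real difficulty is that $H_0$ need only be normal in $H$, not in $\Gamma$. I would replace it by $H_1 := \bigcap_{g\in \Gamma}\t_g(H_0)$. Since $H$ is finitely generated, only finitely many conjugates $\t_g(H_0)$ occur, so $H_1$ has finite index in $H$; by construction it is normal in $\Gamma$, contained in $H_0$, and the identity $\f(\t_g(h))=\t_{\f(g)}(\f(h))$ makes it $\f$-invariant. Thus $H_1$ still separates Reidemeister classes of $\f_H$, hence also separates the intersections with $H$ of Reidemeister classes of $\f$ lying over the class of $e\in A$ under $\Gamma\to A$.

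The second difficulty is separating the Reidemeister classes of $\f$ over the other classes of $\f_A$. Here I would use Lemma \ref{lem:shifts}: picking representatives $g\in\Gamma$ of each of the $R(\f_A)$ classes in $A$, the classes over $g$ are obtained from those of $\t_{g^{-1}}\circ\f$, which still satisfies the inductive hypothesis. Running the previous construction for each such shifted endomorphism produces finitely many finite-index subgroups $H_1^{(g)}\subseteq H$, each normal in $\Gamma$ and $\f$-invariant (normality in $\Gamma$ gives $\t_{g}$-invariance, which combined with $\t_{g^{-1}}\circ \f$-invariance gives $\f$-invariance). Taking $H' := \bigcap_g H_1^{(g)}$, still finite-index in $H$, $\f$-invariant, and normal in $\Gamma$, the projection $p:\Gamma\to\Gamma':=\Gamma/H'$ is a $\f$-equivariant bijection on Reidemeister classes; $\Gamma'$ is finite-by-abelian (fitting into $1\to H/H'\to\Gamma'\to A\to 1$ with $H/H'$ finite), hence has finitely many inner automorphisms. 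Lemma \ref{lem:numberofinner} applies to the induced endomorphism $\f_{\Gamma'}$ and, pulling back along $p$ via the bijection of Reidemeister classes together with Lemma \ref{lem:factoriz}, yields TBFT$_{ff}$ for $\f$ on $\Gamma$. I expect the main obstacle to be precisely the two normalization steps above: ensuring $H'$ is simultaneously $\f$-invariant, normal in $\Gamma$, and still separates Reidemeister classes over \emph{every} class of $A$, rather than only over the trivial one.
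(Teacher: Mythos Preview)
Your proposal is correct and follows essentially the same approach as the paper: induction along the derived series, the same chain of lemmas (\ref{lem:epimorphmappingofclasses}, \ref{lem:fixedpointsforabelian}, \ref{lem:fixedpointsonfaxtorandreid}) to get $R(\f_H)<\infty$, the same normalization $H_1=\bigcap_{g\in\Gamma}\t_g(H_0)$, the same shift trick via Lemma \ref{lem:shifts} to handle the remaining $\f_A$-classes, and the final reduction to a finite-by-abelian quotient where Lemma \ref{lem:numberofinner} applies. The only cosmetic differences are your citing Proposition \ref{prop:abelcasefin} rather than \ref{prop:abelcase} for the base case (which is in fact the more accurate reference for TBFT$_{ff}$) and your being slightly more explicit about pulling the conclusion back from $\Gamma'$ to $\Gamma$ via Lemma \ref{lem:factoriz}.
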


\begin{rk}
The results of this section can be extended to some
virtually polycyclic groups (under supposition of some
polycyclic subgroup to be $\f$-invariant).
\end{rk} 

More precisely the following statement can be proved by word-by-word
rewriting of the above argument.

\begin{teo}\label{teo:almostpolyend}
Let $G$ be an almost polycyclic group admitting a fully invariant
polycyclic subgroup of finite index. Then TBFT$_{ff}$ is true
for any endomorphism $\f:G\to G$ with $R(\phi)<\infty$. 
\end{teo}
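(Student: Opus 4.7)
The plan is to reduce to Theorem \ref{teo:polyend} via one extra inductive step that mimics the induction step in its proof. Let $N\ss G$ denote the fully invariant polycyclic subgroup of finite index. Since $N$ is fully invariant, any endomorphism $\f:G\to G$ restricts to $\f|_N:N\to N$, and $N$ is automatically normal in $G$.

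First, I would establish that $R(\f|_N)<\infty$. Applying Lemma \ref{lem:fixedpointsonfaxtorandreid} with $H:=N$ yields $R(\f|_N)\le R(\f)\cdot|C(\f_{G/N})|\le R(\f)\cdot|G/N|<\infty$. Theorem \ref{teo:polyend} then furnishes TBFT$_{ff}$ for $\f|_N$, and by Lemma \ref{lem:factoriz} there exists a $\f|_N$-invariant normal subgroup $N_0\ss N$ of finite index such that $N\to N/N_0$ induces a bijection on Reidemeister classes of $\f|_N$.

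Next, I would copy the inductive step in the proof of Theorem \ref{teo:polyend} verbatim with $\G:=G$, $H:=N$, $A:=G/N$. Set $N_1:=\bigcap_{g\in G}\t_g(N_0)$; as in the original proof, $N_1$ has finite index in $N$ (because $N$ is finitely generated, hence has only finitely many subgroups of a given finite index), is normal in $G$, and is $\f$-invariant. For each Reidemeister class of $\f_A$, picking a representative $g\in G$ and running the analogous construction for $\t_{g^{-1}}\circ\f$ (which restricts to $N$ by normality of $N$) yields subgroups of $N$ whose finite intersection $H'$ is normal in $G$, $\f$-invariant, and makes the projection $p:G\to G/H'$ separate all Reidemeister classes of $\f$.

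Finally, $G/H'$ is an extension of the finite group $G/N$ by the finite group $N/H'$, hence finite; in particular it has only finitely many inner automorphisms, and Lemma \ref{lem:numberofinner} delivers TBFT$_{ff}$ for $\f$. The sole place where abelianness of $A$ was actually invoked in the inductive step of Theorem \ref{teo:polyend} is the appeal to Lemma \ref{lem:fixedpointsforabelian} to secure finiteness of $C(\f_A)$; since here $A=G/N$ is finite, this holds automatically, so the word-by-word rewriting goes through without genuine obstruction.
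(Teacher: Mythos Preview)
Your proposal is correct and is exactly the ``word-by-word rewriting'' the paper asserts: run one further inductive step with $H=N$ and $A=G/N$, using Theorem~\ref{teo:polyend} on the polycyclic $N$ as the inductive input, and observe that finiteness of $A$ replaces the appeal to Lemma~\ref{lem:fixedpointsforabelian}. One small remark: in your situation $G/H'$ is already finite, so you may invoke Lemma~\ref{lem:factoriz} directly at the end rather than passing through Lemma~\ref{lem:numberofinner}; the paper needs the latter only because in its inductive step the quotient $\Gamma'$ is merely finite-by-abelian.
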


Let us illustrate this theorem by the following
\begin{ex}\label{examp:halee}
In \cite{HaLee2015} seven series of almost polycyclic (polycyclic-by-finite)
groups with the property $R_\infty$ (any automorphism has infinite
Reidemeister number) were found. They have a fully invariant  
polycyclic subgroup of finite index (\cite[Remark 2.1]{HaLee2015},
\cite{LeeLee2009}).
Thus, they are covered by Theorem \ref{teo:almostpolyend}.

On the other hand, they evidently have endomorphisms with finite
Reidemeister number (see Prop. \ref{prop:drast} and Cor.
\ref{cor:finiteimendo}).
\end{ex}


\def\cprime{$'$} \def\cprime{$'$} \def\cprime{$'$} \def\cprime{$'$}
  \def\cprime{$'$} \def\cprime{$'$} \def\cprime{$'$} \def\cprime{$'$}
  \def\dbar{\leavevmode\hbox to 0pt{\hskip.2ex \accent"16\hss}d}
  \def\cprime{$'$} \def\cprime{$'$}
  \def\polhk#1{\setbox0=\hbox{#1}{\ooalign{\hidewidth
  \lower1.5ex\hbox{`}\hidewidth\crcr\unhbox0}}} \def\cprime{$'$}
  \def\cprime{$'$} \def\cprime{$'$} \def\cprime{$'$}

\end{document}